\documentclass{amsart}
\usepackage{amsfonts}
\usepackage{amsmath,amscd}
\usepackage{amsthm}
\usepackage{amssymb}
\usepackage{latexsym}
\usepackage{mathrsfs}
\usepackage[usenames,dvipsnames]{color}

\allowdisplaybreaks

\newtheorem{thm}{Theorem}[section]
\newtheorem{cor}[thm]{Corollary}
\newtheorem{lem}[thm]{Lemma}

\newtheorem{prop}[thm]{Proposition}

\newtheorem{rem}[thm]{Remark}


\usepackage[normalem]{ulem}

\numberwithin{equation}{section}
\setcounter{section}{0}
\begin{document}
\newcommand{\beqa}{\begin{eqnarray}}
\newcommand{\eeqa}{\end{eqnarray}}
\newcommand{\thmref}[1]{Theorem~\ref{#1}}
\newcommand{\secref}[1]{Sect.~\ref{#1}}
\newcommand{\lemref}[1]{Lemma~\ref{#1}}
\newcommand{\propref}[1]{Proposition~\ref{#1}}
\newcommand{\corref}[1]{Corollary~\ref{#1}}
\newcommand{\remref}[1]{Remark~\ref{#1}}
\newcommand{\er}[1]{(\ref{#1})}
\newcommand{\nc}{\newcommand}
\newcommand{\rnc}{\renewcommand}

\nc{\cal}{\mathcal}

\nc{\goth}{\mathfrak}
\rnc{\bold}{\mathbf}
\renewcommand{\frak}{\mathfrak}
\renewcommand{\Bbb}{\mathbb}

\newcommand{\id}{\mathrm{id}}
\nc{\Cal}{\mathcal}
\nc{\Xp}[1]{X^+(#1)}
\nc{\Xm}[1]{X^-(#1)}
\nc{\on}{\operatorname}
\nc{\ch}{\mbox{ch}}
\nc{\Z}{{\Bbb Z}}
\nc{\J}{{\mathcal J}}
\nc{\Q}{{\Bbb Q}}
\renewcommand{\P}{{\mathcal P}}
\nc{\K}{{\Bbb K}}
\nc{\N}{{\Bbb N}}
\nc\beq{\begin{equation}}
\nc\enq{\end{equation}}
\nc\lan{\langle}
\nc\ran{\rangle}
\nc\bsl{\backslash}
\nc\mto{\mapsto}
\nc\lra{\leftrightarrow}
\nc\hra{\hookrightarrow}
\nc\sm{\smallmatrix}
\nc\esm{\endsmallmatrix}
\nc\sub{\subset}
\nc\ti{\tilde}
\nc\nl{\newline}
\nc\fra{\frac}
\nc\und{\underline}
\nc\ov{\overline}
\nc\ot{\otimes}
\nc\qOns{\mathcal{B}_c}
\nc\bbq{\bar{\bq}_l}
\nc\bcc{\thickfracwithdelims[]\thickness0}
\nc\ad{\text{\rm ad}}
\nc\Ad{\text{\rm Ad}}
\nc\Hom{\text{\rm Hom}}
\nc\End{\text{\rm End}}
\nc\Ind{\text{\rm Ind}}
\nc\Res{\text{\rm Res}}
\nc\Ker{\text{\rm Ker}}
\rnc\Im{\text{Im}}
\nc\sgn{\text{\rm sgn}}
\nc\tr{\text{\rm tr}}
\nc\Tr{\text{\rm Tr}}
\nc\supp{\text{\rm supp}}
\nc\card{\text{\rm card}}
\nc\bst{{}^\bigstar\!}
\nc\he{\heartsuit}
\nc\clu{\clubsuit}
\nc\spa{\spadesuit}
\nc\di{\diamond}
\nc\cW{\cal W}
\nc\cG{\cal G}
\nc\al{\alpha}
\nc\bet{\beta}
\nc\ga{\gamma}
\nc\de{\delta}
\nc\ep{\epsilon}
\nc\io{\iota}
\nc\om{\omega}
\nc\si{\sigma}
\rnc\th{\theta}
\nc\ka{\kappa}
\nc\la{\lambda}
\nc\ze{\zeta}

\nc\vp{\varpi}
\nc\vt{\vartheta}
\nc\vr{\varrho}

\nc\Ga{\Gamma}
\nc\De{\Delta}
\nc\Om{\Omega}
\nc\Si{\Sigma}
\nc\Th{\Theta}
\nc\La{\Lambda}

\nc\boa{\bold a}
\nc\bob{\bold b}
\nc\boc{\bold c}
\nc\bod{\bold d}
\nc\boe{\bold e}
\nc\bof{\bold f}
\nc\bog{\bold g}
\nc\boh{\bold h}
\nc\boi{\bold i}
\nc\boj{\bold j}
\nc\bok{\bold k}
\nc\bol{\bold l}
\nc\bom{\bold m}
\nc\bon{\bold n}
\nc\boo{\bold o}
\nc\bop{\bold p}
\nc\boq{\bold q}
\nc\bor{\bold r}
\nc\bos{\bold s}
\nc\bou{\bold u}
\nc\bov{\bold v}
\nc\bow{\bold w}
\nc\boz{\bold z}

\nc\ba{\bold A}
\nc\bb{\bold B}
\nc\bc{\bold C}
\nc\bd{\bold D}
\nc\be{\bold E}
\nc\bg{\bold G}
\nc\bh{\bold H}
\nc\bi{\bold I}
\nc\bj{\bold J}
\nc\bk{\bold K}
\nc\bl{\bold L}
\nc\bm{\bold M}
\nc\bn{\bold N}
\nc\bo{\bold O}
\nc\bp{\bold P}
\nc\bq{\bold Q}
\nc\br{\bold R}
\nc\bs{\bold S}
\nc\bt{\bold T}
\nc\bu{\bold U}
\nc\bv{\bold V}
\nc\bw{\bold W}
\nc\bz{\bold Z}
\nc\bx{\bold X}

\nc\ca{\mathcal A}
\nc\cb{\mathcal B}
\nc\cc{\mathcal C}
\nc\cd{\mathcal D}
\nc\ce{\mathcal E}
\nc\cf{\mathcal F}
\nc\cg{\mathcal G}
\rnc\ch{\mathcal H}
\nc\ci{\mathcal I}
\nc\cj{\mathcal J}
\nc\ck{\mathcal K}
\nc\cl{\mathcal L}
\nc\cm{\mathcal M}
\nc\cn{\mathcal N}
\nc\co{\mathcal O}
\nc\cp{\mathcal P}
\nc\cq{\mathcal Q}
\nc\car{\mathcal R}
\nc\cs{\mathcal S}
\nc\ct{\mathcal T}
\nc\cu{\mathcal U}
\nc\cv{\mathcal V}
\nc\cz{\mathcal Z}
\nc\cx{\mathcal X}
\nc\cy{\mathcal Y}

\nc\e[1]{E_{#1}}
\nc\ei[1]{E_{\delta - \alpha_{#1}}}
\nc\esi[1]{E_{s \delta - \alpha_{#1}}}
\nc\eri[1]{E_{r \delta - \alpha_{#1}}}
\nc\ed[2][]{E_{#1 \delta,#2}}
\nc\ekd[1]{E_{k \delta,#1}}
\nc\emd[1]{E_{m \delta,#1}}
\nc\erd[1]{E_{r \delta,#1}}

\nc\ef[1]{F_{#1}}
\nc\efi[1]{F_{\delta - \alpha_{#1}}}
\nc\efsi[1]{F_{s \delta - \alpha_{#1}}}
\nc\efri[1]{F_{r \delta - \alpha_{#1}}}
\nc\efd[2][]{F_{#1 \delta,#2}}
\nc\efkd[1]{F_{k \delta,#1}}
\nc\efmd[1]{F_{m \delta,#1}}
\nc\efrd[1]{F_{r \delta,#1}}

\nc\fa{\frak a}
\nc\fb{\frak b}
\nc\fc{\frak c}
\nc\fd{\frak d}
\nc\fe{\frak e}
\nc\ff{\frak f}
\nc\fg{\frak g}
\nc\fh{\frak h}
\nc\fj{\frak j}
\nc\fk{\frak k}
\nc\fl{\frak l}
\nc\fm{\frak m}
\nc\fn{\frak n}
\nc\fo{\frak o}
\nc\fp{\frak p}
\nc\fq{\frak q}
\nc\fr{\frak r}
\nc\fs{\frak s}
\nc\ft{\frak t}
\nc\fu{\frak u}
\nc\fv{\frak v}
\nc\fz{\frak z}
\nc\fx{\frak x}
\nc\fy{\frak y}

\nc\fA{\frak A}
\nc\fB{\frak B}
\nc\fC{\frak C}
\nc\fD{\frak D}
\nc\fE{\frak E}
\nc\fF{\frak F}
\nc\fG{\frak G}
\nc\fH{\frak H}
\nc\fJ{\frak J}
\nc\fK{\frak K}
\nc\fL{\frak L}
\nc\fM{\frak M}
\nc\fN{\frak N}
\nc\fO{\frak O}
\nc\fP{\frak P}
\nc\fQ{\frak Q}
\nc\fR{\frak R}
\nc\fS{\frak S}
\nc\fT{\frak T}
\nc\fU{\frak U}
\nc\fV{\frak V}
\nc\fZ{\frak Z}
\nc\fX{\frak X}
\nc\fY{\frak Y}
\nc\tfi{\ti{\Phi}}
\nc\bF{\bold F}
\rnc\bol{\bold 1}

\nc\ua{\bold U_\A}

\nc\qinti[1]{[#1]_i}
\nc\q[1]{[#1]_q}
\nc\xpm[2]{E_{#2 \delta \pm \alpha_#1}}  
\nc\xmp[2]{E_{#2 \delta \mp \alpha_#1}}
\nc\xp[2]{E_{#2 \delta + \alpha_{#1}}}
\nc\xm[2]{E_{#2 \delta - \alpha_{#1}}}
\nc\hik{\ed{k}{i}}
\nc\hjl{\ed{l}{j}}
\nc\qcoeff[3]{\left[ \begin{smallmatrix} {#1}& \\ {#2}& \end{smallmatrix}
\negthickspace \right]_{#3}}
\nc\qi{q}
\nc\qj{q}

\nc\ufdm{{_\ca\bu}_{\rm fd}^{\le 0}}


\nc\isom{\cong} 

\nc{\pone}{{\Bbb C}{\Bbb P}^1}
\nc{\pa}{\partial}
\def\H{\mathcal H}
\def\L{\mathcal L}
\nc{\F}{{\mathcal F}}
\nc{\Sym}{{\goth S}}
\nc{\A}{{\mathcal A}}
\nc{\arr}{\rightarrow}
\nc{\larr}{\longrightarrow}

\nc{\ri}{\rangle}
\nc{\lef}{\langle}
\nc{\W}{{\mathcal W}}
\nc{\uqatwoatone}{{U_{q,1}}(\su)}
\nc{\uqtwo}{U_q(\goth{sl}_2)}
\nc{\dij}{\delta_{ij}}
\nc{\divei}{E_{\alpha_i}^{(n)}}
\nc{\divfi}{F_{\alpha_i}^{(n)}}
\nc{\Lzero}{\Lambda_0}
\nc{\Lone}{\Lambda_1}
\nc{\ve}{\varepsilon}
\nc{\phioneminusi}{\Phi^{(1-i,i)}}
\nc{\phioneminusistar}{\Phi^{* (1-i,i)}}
\nc{\phii}{\Phi^{(i,1-i)}}
\nc{\Li}{\Lambda_i}
\nc{\Loneminusi}{\Lambda_{1-i}}
\nc{\vtimesz}{v_\ve \otimes z^m}

\nc{\asltwo}{\widehat{\goth{sl}_2}}
\nc\ag{\widehat{\goth{g}}}  
\nc\teb{\tilde E_\boc}
\nc\tebp{\tilde E_{\boc'}}

\newcommand{\LR}{\bar{R}}
\newcommand{\eeq}{\end{equation}}
\newcommand{\ben}{\begin{eqnarray}}
\newcommand{\een}{\end{eqnarray}}

\newcommand{\C}{\mathbb{C}}
\newcommand{\R}{\mathbb{R}}
\newcommand{\Rbar}{\overline{\roots}}
\newcommand{\gfrak}{{\mathfrak{g}}}
\newcommand{\ogamma}{\overline{\gamma}}
\newcommand{\ons}{O}
\newcommand{\ima}{{\mathrm{im}}}
\newcommand{\real}{{\mathrm{re}}}
\newcommand{\uqg}{U_q(\gfrak)}
\newcommand{\roots}{{\mathcal R}}
\newcommand{\slz}{{\mathfrak{sl}}_2(\C)}
\newcommand{\slzh}{\widehat{\mathfrak{sl}}_2}
\newcommand{\thetah}{\widehat{\theta}}
\newcommand{\uqslzh}{U_q(\slzh)}

\newcommand \red {\textcolor{red}}

\title[Root vectors for the $\lowercase{q}$-Onsager algebra]{Braid group action and root vectors for the $\lowercase{q}$-Onsager algebra}
\thanks{Pascal Baseilhac is supported by C.N.R.S. Stefan Kolb was supported by EPSRC grant EP/K025384/1.}

\author{Pascal Baseilhac}
\address{Institut Denis-Poisson CNRS/UMR 7013 - Universit\'e de Tours - Universit\'e d'Orl\'eans
Parc de Grammont, 37200 Tours, FRANCE}
\email{pascal.baseilhac@idpoisson.fr}

\author{Stefan Kolb}
\address{School of Mathematics, Statistics and Physics, Newcastle University, Newcastle upon Tyne NE1 7RU, UK}
\email{stefan.kolb@newcastle.ac.uk}

\subjclass[2010]{17B37; 81R50}
\keywords{Quantum groups, $q$-Onsager algebra, braid group}

\begin{abstract}
  We define two algebra automorphisms $T_0$ and $T_1$ of the $q$-Onsager algebra $\cb_c$, which provide an analog of G.~Lusztig's braid group action for quantum groups. These automorphisms are used to define root vectors which give rise to a PBW basis for $\cb_c$. We show that the root vectors satisfy $q$-analogs of Onsager's original commutation relations. The paper is much inspired by I.~Damiani's construction and investigation of root vectors for the quantized enveloping algebra of $\slzh$.
\end{abstract}

\maketitle
\section{Introduction}
The Onsager algebra $\ons$ appeared first in 1944 in L.~Onsager's investigation of the two-dimensional Ising model \cite{a-Onsager44}. It is an infinite dimensional Lie algebra with two natural presentations in terms of generators and relations. Onsager's original definition provides a linear basis $\{A_n, G_m\,|\,n\in \Z, m\in \N\}$ and the commutators
\begin{align}
    [A_n,A_m]= 4 G_{n-m}, \quad [G_n,G_m]=0, \quad
    [G_m,A_n] = 2 A_{n+m} - 2 A_{n-m}. \label{eq:Onsager}
\end{align}
L.~Dolan and M.~Grady uncovered a second presentation in terms of only two Lie algebra generators $A_0, A_1$ and the defining relations
\begin{align}\label{eq:DG}
    [A_0,[A_0,[A_0,A_1]]]&= 16 [A_0,A_1], &   [A_1,[A_1,[A_1,A_0]]]&= 16 [A_1,A_0],
\end{align}
see \cite{a-DolanGrady82}. With the advent of the general theory of Kac-Moody algebras \cite{b-Kac1}, it became clear that the Onsager algebra is isomorphic to the Lie subalgebra of the affine Lie algebra $\slzh$ consisting of all elements fixed under the Chevalley involution. The loop realization of $\slzh$ leads to the presentation \eqref{eq:Onsager} while the realization in terms of a Cartan matrix leads to the presentation \eqref{eq:DG}. In particular, the two presentations define isomorphic Lie algebras, see also \cite{a-Davies91,a-Roan91}.

The $q$-Onsager algebra $\cb_c$ is a $q$-analog of the universal enveloping algebra $U(\ons)$. In the present paper it depends on a parameter $c\in \Q(q)$ and $q$ is a formal variable. Initially, the $q$-Onsager algebra was defined in terms of generators and $q$-analogs of the Dolan-Grady relations \eqref{eq:DG}, see \cite{a-Terwilliger03}, \cite{a-Baseilhac05a}. The same relations showed up earlier in the context of polynomial association schemes \cite{a-Terwilliger93}. The $q$-Onsager algebra $\cb_c$ can be realized as a left or right coideal subalgebra of the quantized enveloping algebra $U_q(\slzh)$, see \cite{a-BasBel09}, \cite{a-BasBel12}, \cite{a-Kolb14}. It is the simplest example of a quantum symmetric pair coideal subalgebra of affine type, see \cite[Example 7.6]{a-Kolb14}.

It is an open problem to find quantum group analogs of the generators  $A_n, G_m$ and of Onsager's relations \eqref{eq:Onsager}. Attempts to find a current algebra realization of $\cb_c$ in the spirit of Drinfeld's second realization of $U_q(\slzh)$ were made in \cite{a-BasKoz05}, \cite{a-BasShi10}. This was pursued further in \cite{a-BasBel17p} where it is conjectured that $\cb_c$ is isomorphic to a certain quotient of the current algebra ${\mathcal A}_q$ proposed in \cite{a-BasShi10}. The generators of the quotient of ${\mathcal A}_q$, however, do not specialize to the generators $A_n, G_m$ of $\ons$. Nevertheless, their classical analogs generate a Lie algebra that is isomorphic to the Onsager algebra, see \cite[Theorem 2]{a-BasCr18}. Note also that a third presentation of $O$ in the framework of the non-standard classical Yang-Baxter algebra has been recently identified \cite[Theorem 1]{a-BasBelCr17}.

Let $\alpha_0,\alpha_1$ denote the simple roots for $\slzh$ and set $\delta=\alpha_0+\alpha_1$. In the present paper we construct quantum group analogs $\{B_{n\delta+\alpha_1}, B_{m \delta} \, | \, n\in \Z, m\in \N\}$ of Onsager's generators $\{A_n,G_m\,|\,n\in \Z, m\in\N\}$ of $\ons$. More precisely, let
    \begin{align}\label{eq:slzh-roots}
        \roots=\{n\delta+\alpha_0,n\delta+\alpha_1, m\delta\,|\,n\in \Z, m\in \Z\setminus\{0\}\}
    \end{align}
be the root system of $\slzh$ and set
    \begin{align}\label{eq:Rbar}
       \Rbar=\roots/\sim
    \end{align}
    where $\sim$ is the equivalence relation on  $\roots$ given by $\alpha \sim \beta \Leftrightarrow \alpha=\pm \beta$. We define root vectors $B_\gamma$ for all real roots $\gamma\in \roots^{\real}=\{n\delta\pm\alpha_1\,|\,n\in \Z\}$ and for all positive imaginary roots $\gamma=n\delta$ for $n\in \N$. The root vectors satisfy
    \begin{align}\label{eq:pm-invariance}
      B_\gamma=B_{-\gamma} \qquad \mbox{for all $\gamma\in\roots^{\mathrm{re}}$.}
    \end{align}
For any $\gamma \in \roots$ let $\overline{\gamma}$ denote its image under the canonical projections $\roots\rightarrow \Rbar$. By \eqref{eq:pm-invariance} we may define $B_{\ogamma}=B_\gamma$ for all $\gamma\in \roots^\real\cup\N\delta$ and hence we have a well defined root vector $B_{\ogamma}$ for any $\ogamma\in \Rbar$. Using a filtered-graded argument and results from \cite{a-Damiani93}, we prove a Poincar\'e-Birkhoff-Witt Theorem for $\cb_c$.

\medskip

\noindent {\bf Theorem I.} (Theorem \ref{thm:PBW}) \textit{For any total ordering on $\Rbar$ the ordered monomials in the root vectors $\{B_{\ogamma}\,|\,\ogamma \in \Rbar\}$ form a basis of $\cb_c$.}

\medskip

\noindent
Once an ordering on $\Rbar$ is specified,  we refer to this basis as the \emph{PBW-basis of $\cb_c$}. We show that the root vectors $B_{\ogamma}$ for $\ogamma \in \Rbar$ satisfy commutation relations which are $q$-analogs of the Onsager relations \eqref{eq:Onsager}. To this end note that $(\Z\delta+\alpha_1)\cup \N\delta$ is a representative set of $\Rbar$. We consider the uniquely determined total ordering $<$ on $(\Z\delta+\alpha_1)\cup \N\delta$ and hence on $\Rbar$ defined by
\begin{align}\label{eq:order-def}
  (m+1)\delta+\alpha_1<m\delta+\alpha_1<k\delta<(k+1)\delta
\end{align}  
for all $m\in \Z, k\in\N$. This ordering is adapted to the $q$-Onsager algebra because the corresponding PBW-basis is preserved under an automorphism which appears in the definition of the real root vectors, see Section \ref{sec:choiceR+}. For any $p\in \Q(q)$ and $x,y\in U_q(\slzh)$ define the $p$-commutator by $[x,y]_p=xy-pyx$. Recall the notation $[2]_q=q+q^{-1}$. In the following theorem, which is the main result of this paper, we write the real root vectors as $B_{n\delta+\alpha_1}$ for $n\in \Z$, and we consider the PBW-basis with respect to the total ordering $<$ defined by \eqref{eq:order-def}.

\medskip

\noindent
{\bf Theorem II.} \textit{For any $m,n\in \N$ and $r\in \Z$ the following relations hold
  \begin{align}
   & [B_{m\delta},B_{n\delta}]=0   \label{eq:qOnsCom1}\\
   & [B_{r\delta+\alpha_1}, B_{(r+m)\delta+\alpha_1}]_{q^{-2}}=-B_{m\delta}+ C^{\real}_{r,m}  \label{eq:qOnsCom2}\\
    & [B_{m\delta},B_{r\delta+\alpha_1}] = c [2]_q \big(q^{-2(m-1)} B_{(r-m)\delta+\alpha_1} 
    - q^{2(m-1)}B_{(r+m)\delta+\alpha_1}\big) + C^{\ima}_{r,m} \label{eq:qOnsCom3}
  \end{align}
  where $C^{\real}_{r,m}$ and $C^{\ima}_{r,m}$ are linear combinations of elements of the PBW-basis of $\cb_c$ with coefficients in $(q-1)\big(\Z[q,q^{-1}]+ \Z[q,q^{-1}]c\big)$.}

\medskip

The commutation relations \eqref{eq:qOnsCom1},  \eqref{eq:qOnsCom2}, and \eqref{eq:qOnsCom3} specialize to the Onsager relations \eqref{eq:Onsager} for $q\rightarrow 1$, up to rescaling of the root vectors, see Section \ref{sec:specialization}. The elements $C^{\real}_{r,m}$ and $C^{\ima}_{r,m}$ are given explicitly in Propositions \ref{prop:comRealReal} and \ref{prop:Bb10Bndelbis}.

The construction of the root vectors $B_{\ogamma}$ for $\ogamma\in \Rbar$ and the proof of Theorem II are much inspired by I.~Damiani's construction and investigation of root vectors for $U_q(\slzh)$ in \cite{a-Damiani93}. Damiani constructs root vectors for the positive part $U^+$ of $U_q(\slzh)$. She uses G.~Lusztig's braid group action of the free group in two generators on $U_q(\slzh)$ to construct real root vectors $\{E_{n\delta+\alpha_0}, E_{n\delta+\alpha_1}\,|\,n\in \N_0\}$ in $U^+$. Subsequently she obtains imaginary root vectors $\{E_{n\delta}\,|\,n\in \N\}$ as quadratic expressions in the real root vectors. Let $\roots_+\subset \roots$ denote the set of positive roots. Damiani proves commutation formulas for the $E_\beta$, $\beta\in \roots_+$, by a subtle inductive procedure.

It was conjectured in \cite[Conjecture 1.2]{a-KolPel11} that quantum symmetric pairs of finite type have a natural action of a braid group, and it is reasonable to expect that such an action also exists in the Kac-Moody case. The Onsager algebra is invariant under the action of the braid group of $\slzh$, which is the free group in two generators. Hence we expect to find two suitable algebra automorphisms of the $q$-Onsager algebra $\cb_c$. We construct these automorphisms, $T_0$ and $T_1$, in Section \ref{sec:auto} and use them to define real root vectors $\{B_{n\delta+\alpha_0}, B_{n\delta+\alpha_1}\,|\,n\in \Z\}$ very much in the spirit of \cite{a-Damiani93}. By definition the real root vectors satisfy \eqref{eq:pm-invariance}. The $q$-Onsager algebra $\cb_c$ is filtered with associated graded algebra $U^+$. The imaginary root vectors $\{B_{m\delta}\,|\,m\in\N\}$ are again defined as quadratic expressions in the real root vectors, however these expressions now involve additional terms of lower filter degree, see Section \ref{sec:imroot}. Once all root vectors are defined, the PBW basis of Theorem I is established by a filtered-graded argument using the PBW theorem for $U^+$ in \cite{a-Damiani93} and facts about the structure of quantum symmetric pairs, see \cite{a-Kolb14}. The commutation relations in Theorem II are again proved by an inductive calculation. This calculation is significantly harder than the corresponding calculations in \cite{a-Damiani93} due to the lower order terms in the definition of the imaginary root vectors $B_{m\delta}$.

The fact that the coefficients in Theorem II lie in $(q-1)\big(\Z[q,q^{-1}]+\Z[q,q^{-1}]c\big)$ suggests a connection to integral forms. In particular, it is natural to ask whether the results of the present paper still hold if we relax our assumptions about $q$ to include for example roots of unity. The answer is not straightforward and shall be considered elsewhere. 

The paper is organized as follows. In Section \ref{sec:defs} we recall the definition of the Onsager algebra $\ons$ and the $q$-Onsager algebra $\cb_c$ and we introduce the braid group action on $\cb_c$. The fact that $T_0$ and $T_1$ are indeed well-defined algebra automorphisms of $\cb_c$ is checked by computer calculations which are not reproduced here. In Section \ref{sec:root} we use the automorphisms $T_0$ and $T_1$ to define real and imaginary root vectors in $\cb_c$. We show that, up to a scalar factor, these root vectors specialize to the Onsager generators $A_n$, $G_m$. In Section \ref{sec:PBWBc} we establish Theorem I. We first recall the PBW theorem for $U^+$ as proved in \cite{a-Damiani93}. We then show in Proposition \ref{prop:grad-fil}  that up to a factor the root vectors $B_\gamma\in \cb_c$ for $\gamma\in \roots_+$ project onto Damiani's root vectors $E_\gamma\in U^+$ in the associated graded algebra.

In Section \ref{sec:commutators}, which forms the bulk of the paper, we prove the commutation relations of Theorem II explicitly. We first deal with the $q^{-2}$-commutator of two real root vectors in Proposition \ref{prop:comRealReal}. The hardest part are the commutators of a real with an imaginary root vector which are established in Proposition \ref{prop:Bb10Bndelbis}. These commutators involve a crucial term $F_n$ for $n\ge 2$. The term $F_n$ satisfies a recursive formula, the proof of which is deferred to Appendix \ref{app:Fn}.

\medskip

\noindent{\bf Acknowledgements.} The authors are grateful to Istv\'an Heckenberger for checking the existence of the algebra automorphisms $T_0$ and $T_1$ with the computer algebra program FELIX in July 2013. We are much indebted to Travis Scrimshaw for pointing out an omission in the calculation of commutators of real root vectors in a previous version of this paper.
The authors also wish to thank Marta Mazzocco and Paul Terwilliger for their interest in this work and for comments. Finally, we wish to thank all referees of this paper for their valuable comments. One referee in particular provided a five page report of rare quality which led to a complete revision of the paper. This report contained the suggestion to  express the commutation relations in Theorem II in terms of the PBW basis with respect to the ordering \eqref{eq:order-def} or an equivalent ordering. Moreover, this report contained the subtle Remark \ref{rem:referee} and a much simplified proof of Proposition \ref{prop:Bdel-commute}. We are very grateful for these observations which have streamlined the presentation and shortened the paper significantly. 

PB visited the the School of Mathematics and Statistics at Newcastle University in July 2013 when this project started out. SK visited the Laboratoire de Math\'ematiques et Physique Th\'eorique at Universit\'e Tours in May 2014 and in April 2017. Both authors are grateful to the hosting institutions for the hospitality and the good working conditions. 

\vspace{.2cm}

\noindent{\bf Note.} When we were in the final stages of writing the present paper, Paul Terwilliger published the preprint version of \cite{a-Terwilliger17p} which proves the existence of the algebra automorphisms $T_0$ and $T_1$ without the use of computer calculations.

Travis Scrimshaw has used the results of the present paper to implement the $q$-Onsager algebra in SageMath. His implementation is presently awaiting approval.

\section{Braid group action on the $q$-Onsager algebra}\label{sec:defs}
In this introductory section we recall the definition of the Onsager algebra $\ons$ and its $q$-analog $\cb_c$. We then define the algebra automorphisms $T_0$ and $T_1$ of $\cb_c$, which are analogs of the Lusztig automorphisms for $U_q(\slzh)$.
\subsection{The Onsager algebra}
  Let $e,f,h$ denote the standard generators of the Lie algebra $\slz$. The Chevalley involution $\theta:\slz\rightarrow \slz$ is the involutive Lie algebra automorphism determined by
  \begin{align*}
    \theta(e)&=-f, & \theta(f)&=-e, & \theta(h)=-h.
  \end{align*}
  We are interested in the affine Lie algebra $\slzh= \C[t,t^{-1}]\otimes \slz\oplus \C K \oplus \C d$ with the usual Lie bracket
  \begin{align}
  [t^m\ot x, t^n\ot y]&=t^{m+n}\ot [x,y] + m \delta_{m,-n}(x,y)K, \label{eq:com1Lie}\\
  [d,t^n\ot x]&=n t^n\ot x, \qquad
  K \mbox{ is central, } \nonumber
\end{align}
where $(\cdot, \cdot)$ denotes the symmetric invariant bilinear form
on $\slz$ with $(e,f)=1$, see \cite[Chapter 7]{b-Kac1}. The Chevalley involution $\thetah:\slzh\rightarrow \slzh$ is given by
  \begin{align*}
    \thetah(t^n\otimes x)=t^{-n}\ot \theta(x), \quad \thetah(K)=-K, \quad \thetah(d)=-d.
  \end{align*} 
The Onsager algebra $\ons$ is the infinite dimensional Lie subalgebra of $\slzh$ defined by
\begin{align*}
  \ons=\{a\in \slzh\,|\, \thetah(a)=a\}.
\end{align*}  
The triangular decomposition of $\slzh$ implies that the following elements form a basis of $\ons$ as a complex vector space
\begin{equation}\label{eq:Ons-basis}
\begin{aligned}
  A_n &= 2i (t^{n}\ot e - t^{-n} \ot f)  & & \mbox{for all $n\in \Z$},\\
  G_m &= t^m \ot h - t^{-m} \ot h & & \mbox{for all $m\in \N$}.
\end{aligned}
\end{equation}
Using the relations \eqref{eq:com1Lie} one sees that $A_n$ and $G_m$ satisfy the relations \eqref{eq:Onsager}. 
These relations first appeared in 1944 in Onsager's investigation of the Ising model \cite[(60), (61), (61a)]{a-Onsager44}. Let $\alpha_0, \alpha_1$ denote the simple roots of $\slzh$ and let $\delta=\alpha_0+\alpha_1$ denote the minimal positive imaginary root. Recall that the root system of $\slzh$ is given by \eqref{eq:slzh-roots}. By definition
    \begin{align*}
      t^n\ot e\in (\slzh)_{n\delta+\alpha_1}, \quad
      t^{-n}\ot f\in (\slzh)_{-n\delta-\alpha_1}, \quad
      t^m\ot h\in (\slzh)_{m\delta}
    \end{align*}
and hence the basis \eqref{eq:Ons-basis} of $\ons$ is in one-to-one correspondence to the quotient $\Rbar$ defined by \eqref{eq:Rbar}.     
For all $n\in \Z$ we call $A_n$ the \emph{real root vector} associated to $\overline{n\delta+\alpha_1}\in \Rbar$. Similarly, for $m\in \N$ we call $G_m$ the \emph{imaginary root vector} associated to the $\overline{m\delta}\in \Rbar$.

It follows from \eqref{eq:Onsager} that the Onsager algebra $\ons$ is generated by the elements $A_0$, $A_1$ as a Lie algebra. Defining relations can be seen to be the Dolan-Grady relations \eqref{eq:DG}
which were discovered in \cite{a-DolanGrady82}.
Observe that $A_1$ is the real root vector associated to $\overline{\delta+\alpha_1}$. For our purposes it is more convenient to work with generators which are real root vectors associated to the simple roots $\alpha_0, \alpha_1$. To this end define
\begin{align}\label{eq:D0D1}
  D_0&=-\frac{i}{2} A_{-1}= t^{-1}\otimes e - t\otimes f, &
  D_1&=\frac{i}{2} A_{0}=1\otimes f - 1 \otimes e.
\end{align}
The elements $D_0$ and $D_1$ also generate the Lie algebra $\ons$. The defining relations are now given by
\begin{align}\label{eq:slzhRels}
    [D_0,[D_0,[D_0,D_1]]]&= -4 [D_0,D_1], &   [D_1,[D_1,[D_1,D_0]]]&= -4 [D_1,D_0].
\end{align}
\subsection{The $q$-Onsager algebra $\cb_c$}
Let $\Q(q)$ denote the field of rational functions in an indeterminate $q$ and let $c\in \Q(q)$ such that $c(1)=1$. The $q$-Onsager algebra $\cb_c$ is the unital $\Q(q)$-algebra generated by two elements $B_0$, $B_1$ with the defining relations
\begin{equation}
\begin{aligned}\label{eq:qDG}
    \sum_{i=0}^{3} (-1)^i  \left[ \begin{array}{c} 3 \\  i \end{array}\right]_q   B_0^{3-i} {B_1} B_0^{i} &= -qc(q+q^{-1})^2 (B_0B_1-B_1B_0), \\
\sum_{i=0}^{3} (-1)^i  \left[ \begin{array}{c} 3 \\ i \end{array}\right]_q   {B_1}^{3-i} {B_0} {B_1}^{i} &= -qc(q+q^{-1})^2 (B_1B_0-B_0B_1) 
\end{aligned}
\end{equation}
where we use the usual $q$-binomial coefficients given by 
\begin{align*}
\left[ \begin{array}{c} a \\ b \end{array}\right]_q &=\frac{[a]_q!}{[b]_q!\,[a-b]_q!}\ ,&
  [a]_q!&=[a]_q\,[a-1]_q \dots [1]_q\ ,\\
[b]_q&=\frac{q^b-q^{-b}}{q-q^{-1}},& \quad [0]_q!&=1 \ \qquad \mbox{for $a\in \N$, $b\in \N_0$, $a\ge b$.}\nonumber
\end{align*}
At the specialization $q\rightarrow 1$ the relations \eqref{eq:qDG} transform into the modified Dolan-Grady relations \eqref{eq:slzhRels}.

The $q$-Onsager algebra is the simplest example of a quantum symmetric pair coideal subalgebra for an affine Kac-Moody algebra. Indeed, let $\uqslzh$ denote the Drinfeld-Jimbo quantized enveloping algebra of the affine Kac-Moody algebra $\slzh$ with standard generators $E_0, E_1, F_0, F_1, K_0^{\pm 1}, K_1^{\pm 1}$. Then there exists an algebra embedding
\begin{align*}
  \iota &:\cb_c \rightarrow \uqslzh &&\mbox{with} \qquad \iota(B_i)= F_i - c E_i K_i^{-1} \quad \mbox{for $i\in \{0,1\}$,}
\end{align*}
such that $\iota(\cb_c)$ is a right coideal subalgebra of $\uqslzh$, see \cite[Example 7.6]{a-Kolb14}. To match the conventions in \cite{a-Damiani93} it is preferable to work with the algebra embedding
\begin{align}\label{eq:UqgEmb}
  \iota' &:\cb_c \rightarrow \uqslzh &&\mbox{with} \qquad \iota'(B_i)= E_i - c F_i K_i \quad \mbox{for $i\in \{0,1\}$}
\end{align}
which is given by $\iota'=\omega\circ \iota$ where $\omega\colon U_q(\slzh)\rightarrow U_q(\slzh)$ denotes the algebra automorphism given by $\omega(E_i)=F_i$, $\omega(F_i)=E_i$, $\omega(K_i)=K_i^{-1}$.
Then $\iota'(\cb_c)$ is a left coideal subalgebra of $U_q(\slzh)$, see also \cite[(3.15)]{a-BasBel12}.
\begin{rem}
  In \cite[Example 7.6]{a-Kolb14} the $q$-Onsager algebra depends on 2 parameters $c_0,c_1$. Over a field which contains all square roots these algebras are isomorphic for any parameters. Here we choose to retain one parameter $c=c_0=c_1$ because occasionally different choices for $c$ are convenient. However, we do not keep two parameters $c_0$, $c_1$ because this would complicate the definition of the algebra automorphism $\Phi$ in the upcoming Section \ref{sec:auto}.
\end{rem}
\subsection{Automorphisms of $\cb_c$}\label{sec:auto}
We are interested in three $\Q(q)$-algebra automorphisms of $\cb_c$.
Let $\Phi:\cb_c \rightarrow \cb_c$ denote the $\Q(q)$-algebra automorphism defined by $\Phi(B_0)=B_1$ and $\Phi(B_1)=B_0$. Hence $\Phi$ is obtained from the diagram automorphism of $\uqslzh$ by restriction to $\iota(\cb_c)$. Observe that $\Phi^2=\id$.

The other two automorphisms are obtained from what appears to be a general principle, namely that quantum symmetric pair coideal subalgebras come with an action of a braid group \cite{a-KolPel11}. In the case of $\cb_c$ one expects an action of the braid group of type $A_1^{(1)}$ which is the free group in two generators. This action is closely related to Lusztig's braid group action on $\uqslzh$, \cite[Part VI]{b-Lusztig94}, \cite [Section 2.2]{a-Damiani93}, but it is not merely obtained by restriction to the coideal subalgebra, see Remark \ref{rem:not-mere-restriction} below.
\begin{prop}
  There exists a $\Q(q)$-algebra automorphism $T_0:\cb_c\rightarrow \cb_c$ such that
  \begin{align}
     T_0(B_0)&=B_0,\label{eq:T00}\\ 
     T_0(B_1)&=\frac{1}{q^2[2]_qc}\left(B_1B_0^2 - q[2]_qB_0B_1B_0 + q^2B_0^2B_1\right) 
         + B_1.\label{eq:T01}
  \end{align}
  The inverse automorphism is given by
  \begin{align*}
      T_0^{-1}(B_0)&=B_0,\\
      T_0^{-1}(B_1)&=\frac{1}{q^2[2]_qc}\left(B_0^2B_1 - q[2]_qB_0B_1B_0 + q^2B_1B_0^2\right) 
      + B_1.
  \end{align*}
\end{prop}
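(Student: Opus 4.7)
The plan is to verify directly that the proposed images $T_0(B_0)=B_0$ and $T_0(B_1)$ satisfy both $q$-Dolan-Grady relations \eqref{eq:qDG}; by the universal property of $\cb_c$ this extends $T_0$ to a $\Q(q)$-algebra endomorphism. The same procedure applied to the proposed formula for $T_0^{-1}$ yields a second endomorphism, and one then checks on the two generators that $T_0 \circ T_0^{-1}$ and $T_0^{-1} \circ T_0$ act as the identity, which promotes both maps to algebra automorphisms.

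For the first $q$-Dolan-Grady relation I would write $T_0(B_1) = Y + B_1$ with $Y = \frac{1}{q^2[2]_q c}\bigl(B_1 B_0^2 - q[2]_q B_0 B_1 B_0 + q^2 B_0^2 B_1\bigr)$. The contribution of the $B_1$ summand to the relation simply recovers the original first relation, while the contribution of $Y$ yields an identity among the monomials $B_0^{a} B_1 B_0^{b}$ with $a+b=5$. Since such monomials satisfy only the syzygies obtained by multiplying the first Dolan-Grady relation by powers of $B_0$ on either side, this reduces to a small finite-dimensional linear verification.

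For the second $q$-Dolan-Grady relation the substitution of $T_0(B_1)$, which is cubic in $B_0, B_1$, three times produces an identity of total degree $10$, and this is the main obstacle. I would organize the verification by bidegree in $B_0, B_1$ and repeatedly apply both Dolan-Grady relations to straighten long monomials until the remaining combinations cancel; a careful accounting of the normalization constant $\tfrac{1}{q^2[2]_q c}$ should pin everything down. The sheer size of this calculation is precisely why the authors of the present paper enlisted the computer algebra system FELIX; a conceptual alternative, developed later in \cite{a-Terwilliger17p}, lifts the problem to $U_q(\slzh)$ via the embedding $\iota'$ and identifies $T_0$ as an appropriate modification of Lusztig's braid operator that preserves the coideal $\iota'(\cb_c)$.

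Once both $T_0$ and $T_0^{-1}$ are known to be algebra endomorphisms, the mutual-inverse statement reduces to checking $T_0 T_0^{-1}(B_1) = B_1$: substituting the proposed expression for $T_0^{-1}(B_1)$ into $T_0$ and using the first Dolan-Grady relation together with $T_0(B_0)=B_0$ collapses the nested expression back to $B_1$, completing the argument.
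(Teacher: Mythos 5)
Your strategy --- substitute the proposed images of $B_0,B_1$ into the two defining $q$-Dolan--Grady relations to get an endomorphism, do the same for the proposed inverse, and then check $T_0T_0^{-1}$ and $T_0^{-1}T_0$ on the generators --- is exactly the approach taken in the paper, which likewise does not carry out the degree-$10$ reduction by hand but delegates it to the computer algebra system FELIX (and notes Terwilliger's later computer-free proof), just as you anticipate. The only caveat is that, like the paper, you stop short of actually exhibiting the reduction to zero modulo the defining ideal, so what you have is the correct verification plan rather than a completed calculation; also, your aside that the monomials $B_0^aB_1B_0^b$ satisfy \emph{only} the syzygies coming from the first relation would need the PBW theorem (proved later, and resting on this proposition), but it is not actually needed, since the verification only requires exhibiting the relevant combinations as elements of the ideal.
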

This proposition was checked for us by Istv\'an Heckenberger using the computer algebra program FELIX \cite{inp-ApelKlaus}. He verified that $T_0(B_0)$ and $T_0(B_1)$ satisfy the defining relations \eqref{eq:qDG} of $\cb_c$, and similarly for $T_0^{-1}(B_0)$ and $T_0^{-1}(B_1)$, and he confirmed that $T_0(T_0^{-1}(B_1))=B_1=T_0^{-1}(T_0(B_1))$.

The second braid group automorphism $T_1:\cb_c\rightarrow \cb_c$ is defined by
\begin{align}\label{eq:T1PhiT2Phi}
  T_1=\Phi \circ T_0 \circ \Phi.
\end{align}
In words, $T_1$ is obtained from $T_0$ by exchanging subscripts $0$ and $1$ everywhere in \eqref{eq:T00} and \eqref{eq:T01}.

\section{The root vectors}\label{sec:root}
In this section we define quantum analogs of the root vectors $A_n, G_m$ of the Onsager algebra $\ons$ up to scalar multiplication. Quantum analogs of $A_n$ will be called real root vectors, and quantum analogs of $G_m$ will be called imaginary root vectors of $\cb_c$. To mimic Damiani's construction, the quantum analog of $G_m$ will be denoted by $B_{m\delta}$. Similarly, the quantum analog of $A_{n}$ will be denoted $B_{n\delta+\alpha_1}=B_{-(n+1)\delta+\alpha_0}$ for $n\in \Z$, see Section \ref{sec:specialization} for the precise correspondence.
\subsection{Definition of $B_\delta$ and the real root vectors}\label{sec:root-vecs}
Following Damiani's construction \cite[Section 3.1]{a-Damiani93} we set
\beqa
B_\delta = a B_0 B_1 + b B_1 B_0. \nonumber
\eeqa
where $a,b\in \Q(q)$ are parameters which are still to be determined.
For any elements $x,y\in \cb_c$ we write $[x,y]=xy-yx$ to denote their commutator.
One calculates
\beqa
[B_\delta,B_0] &=& q^2 [2]_q c b (T_0(B_1) - B_1) + (a+q^2b) B_0[B_1,B_0] \label{eq:Bdel0}\\
               &=& -q^2 [2]_q c a (T_0^{-1}(B_1) - B_1) - (b+aq^2) [B_0,B_1]B_0, \nonumber
\eeqa
\beqa                 
[B_1,B_\delta] &=& q^2 [2]_q c b (T_1^{-1}(B_0) - B_0) + (a+bq^2) [B_1,B_0]B_1 \nonumber\\ 
               &=& -q^2 [2]_q c a (T_1(B_0) - B_0) - (b+q^2a) B_1[B_0,B_1]. \nonumber
\eeqa
We want to eliminate the terms $[B_i,B_j]B_i$ or $B_i[B_j,B_i]$. Here we have a choice. Indeed, up to an overall factor we can consider either
\beqa
  B_\delta= -B_0 B_1 + q^{-2} B_1 B_0 \qquad \mbox{or} \qquad \tilde{B}_\delta= -B_1 B_0 + q^{-2} B_0 B_1.\label{eq:Bdelta}
\eeqa
Observe that $\Phi(B_\delta)=\tilde{B}_\delta$. As in Damiani's construction the choice of $B_\delta$ dictates the choice of root vectors $B_{n \delta+\alpha_i}$ with $i=0,1$. Indeed, Equation \eqref{eq:Bdel0} suggests to define
\beqa
  B_{\delta+\alpha_0}=T_0(B_1)\qquad \mbox{and} \qquad B_{\delta+\alpha_1}=T_1^{-1}(B_0).\label{eq:Bdelal}
\eeqa
We then obtain
\begin{align}
  [B_\delta,B_0] &= c [2]_q (B_{\delta+\alpha_0} - B_1),\label{eq:BdelB0} \\
  [B_1,B_\delta] &= c [2]_q (B_{\delta+\alpha_1} - B_0).\label{eq:B1Bdel}
\end{align}
We can now check an analog of \cite[Section 3.2, Lemma 1]{a-Damiani93}.
\begin{lem}\label{lem:T1Bdel}
  The following relations hold in $\cb_c$:
  \begin{enumerate}
    \item   $T_1(B_\delta)=\tilde{B}_\delta$ and hence $T_0 \Phi(B_\delta)=B_\delta$.
    \item   For any $n\in\Z$ we have
      \begin{align}
        [B_\delta, (T_0\Phi)^n(B_0)]=  c [2]_q 
        \Big( (T_0\Phi)^{n+1}(B_0) - (T_0\Phi)^{n-1}(B_0)\Big).\label{eq:comm1}
      \end{align}  
  \end{enumerate}  
\end{lem}
\begin{proof}
  To verify the first equation in (1) we calculate 
  \begin{align*}
    T_1(B_\delta)&\stackrel{\phantom{\eqref{eq:qDG}}}{=}-T_1(B_0) B_1 + q^{-2} B_1 T_1(B_0)\\
                 &\stackrel{\phantom{\eqref{eq:qDG}}}{=}\frac{1}{q^2 [2]_q c}\big(-B_0 B_1^3 +q [2]_q B_1 B_0 B_1^2 - q^2 B_1^2 B_0 B_1 \big) - B_0 B_1 \\
       &  \qquad \qquad \qquad        + \frac{1}{q^2 [2]_q c}\big(q^{-2}B_1 B_0 B_1^2 -q^{-1} [2]_q B_1^2 B_0 B_1 + B_1^3 B_0 \big) + q^{-2} B_1 B_0\\ 
       &\stackrel{\phantom{\eqref{eq:qDG}}}{=}\frac{1}{q^2 [2]_q c}\big(B_1^3 B_0 - [3]_qB_1^2 B_0 B_1 + [3]_qB_1 B_0B_1^2 - B_0 B_1^3\big) + q^{-2} B_1 B_0 - B_0 B_1\\
       &\stackrel{\eqref{eq:qDG}}{=}-\frac{[2]_q}{q }  (B_1 B_0-B_0B_1) + (q^{-2} B_1 B_0 - B_0 B_1)\\
       &\stackrel{\phantom{\eqref{eq:qDG}}}{=} -B_1 B_0 + q^{-2} B_0 B_1=\Phi(B_\delta).
  \end{align*}
  The second equation in (1) then follows from \eqref{eq:T1PhiT2Phi}. To verify Equation \eqref{eq:comm1} we apply $(T_0\Phi)^n$ to Equation \eqref{eq:BdelB0} and use (1) and the relations $B_{\delta+\alpha_0}=T_0\Phi(B_0)$ and $B_1=(T_0\Phi)^{-1}(B_0)$. 
\end{proof}
Following Damiani's approach, for any $n\in \Z$ we now define
\begin{align}
  B_{n\delta+\alpha_0}&=(T_0\Phi)^n(B_0),\qquad  B_{n\delta+\alpha_1}=(T_0 \Phi)^{-n}(B_1). \label{eq:Bndelta+alpha}  
\end{align}
Observe that by definition
  \begin{align}\label{eq:Bnd1nd0}
     B_{n\delta+\alpha_1} = B_{-(n+1)\delta+\alpha_0} \qquad \mbox{for all $n\in \Z$}
  \end{align}
and hence $B_\alpha=B_{-\alpha}$ for all real roots. This means that the real root vector $B_\alpha$ for $\alpha\in \roots^\real$ only depends on the image of $\alpha$ under the quotient map $\roots\rightarrow \Rbar$. We frequently write $B_{\overline{\alpha}}$ for $\overline{\alpha}\in \Rbar$.

Relations \eqref{eq:comm1} of the above lemma can now be rewritten as
\begin{align}
  \qquad [B_\delta,B_{n\delta+\alpha_0}]&=c [2]_q\big( B_{(n+1)\delta+\alpha_0} - B_{(n-1)\delta+\alpha_0}\big).\label{eq:BdeltaBreal1}
\end{align}
Using the identification \eqref{eq:Bnd1nd0} we see that \eqref{eq:BdeltaBreal1} is equivalent to
\begin{align}
  [B_\delta,B_{n\delta+\alpha_1}]&= c [2]_q \big( B_{(n-1)\delta+\alpha_1} - B_{(n+1)\delta+\alpha_1}\big).\label{eq:BdeltaBreal2}
\end{align}
\begin{rem}
  In a similar way, we could have defined $\tilde{B}_{n\delta+\alpha_0}=(T_1\Phi)^{-n}(B_0)$ and $\tilde{B}_{n\delta+\alpha_1}=(T_1\Phi)^n(B_1)$.
\end{rem}
\subsection{Definition of the imaginary root vectors $B_{m\delta}$ for $m\ge 2$}\label{sec:imroot}
In  view of the Onsager relations \eqref{eq:Onsager} we aim to construct commuting elements $B_{m\delta}$ for $m\ge 1$. Moreover, in view of Damiani's construction these elements should be fixed by $T_0\Phi$ and of the form
\begin{align*}
  B_{m\delta} = - B_0 B_{(m-1)\delta+\alpha_1} + q^{-2} B_{(m-1)\delta+ \alpha_1} B_0 + \mathrm{l.o.t}
\end{align*}
where $\mathrm{l.o.t}$ denotes terms of lower degree in the generators  $B_0, B_1$ of $\cb_c$. As we will show, the following definition will give the desired properties
\begin{align}\label{eq:Bndelta}
  B_{m\delta} = - B_0 B_{(m-1)\delta+\alpha_1} + q^{-2}B_{(m-1)\delta+\alpha_1}B_0 + (q^{-2}{-}1) C_m
\end{align}
where
\begin{align}\label{eq:Cn}
  C_m=\sum_{p=0}^{m-2} B_{p\delta+\alpha_1}B_{(m-p-2)\delta+\alpha_1}.
\end{align}
Observe in particular that $C_1=0$ and $C_2=B_1^2$.
\subsection{Specialization of root vectors}\label{sec:specialization}
It is well known that the $q$-Onsager algebra $\cb_c$ specializes to the Onsager algebra for $q\rightarrow 1$, see e.g.~\cite[Theorem 10.8]{a-Kolb14}. For any element $x\in \cb_c$ we write $\overline{x}\in \ons$ to denote its specialization if it exists. See \cite[Section 10]{a-Kolb14} for the precise notion of specialization used. By \eqref{eq:D0D1} and \eqref{eq:UqgEmb} one has
\begin{align*}
  \overline{B_0}= \frac{i}{2}A_{-1}, \qquad \overline{B_1}=-\frac{i}{2}A_0
\end{align*}
which by \eqref{eq:Bdelta} implies that $\overline{B_\delta}=G_1$. Comparing \eqref{eq:Onsager} with \eqref{eq:BdeltaBreal1}, \eqref{eq:BdeltaBreal2} one obtains
\begin{align*}
  \overline{B_{n\delta+\alpha_0}}= (-1)^n \frac{i}{2} A_{-n-1}, \qquad
  \overline{B_{n\delta+\alpha_1}}= (-1)^{n+1} \frac{i}{2} A_{n} 
\end{align*}
and hence 
\begin{align*}
  \overline{B_{m\delta}}=(-1)^{m-1}G_m
\end{align*}
by the definition \eqref{eq:Bndelta} of $B_{m\delta}$. 
\section{The PBW theorem for  $\cb_c$}\label{sec:PBWBc}
We now compare the root vectors defined in the previous section to the root vectors defined by Damiani in \cite{a-Damiani93} for the positive part of $U_q(\slzh)$. Using a filtered-graded argument this will allow us to prove a PBW theorem for $\cb_c$ in terms of the specific root vectors defined in Section \ref{sec:root}.
\subsection{The PBW theorem for $U^+$}\label{sec:PBWU+}
Let $U^+$ denote the subalgebra of $U_q(\slzh)$ generated by $E_0, E_1$. Recall that
    \begin{align}\label{eq:roots+}
      {\roots}_+=\{n\delta+\alpha_0, m\delta, n\delta+\alpha_1\,|\,n\in \N_0, m\in \N\}
    \end{align}
denotes the set of positive roots for $\slzh$. Damiani uses Lusztig's braid group action to define root vectors $E_\beta\in U^+$ for every $\beta\in \roots_+$. Recall that Lusztig's braid group automorphisms  
\begin{align*}
  T_{i,L}: U_q(\slzh) \rightarrow U_q(\slzh) \qquad \mbox{for $i\in \{0,1\}$}
\end{align*}
are given by 
\begin{align*}
  T_{i,L}(E_i)&= -F_i K_i, \qquad T_{i,L}(F_i)= -K_i^{-1} E_i,\\
  T_{i,L}(K_i)&= K_i^{-1},\qquad\quad T_{i,L}(K_j)=K_jK_i^2,\\
  T_{i,L}(E_j)&= \frac{1}{[2]_q}(E_i^2 E_j - q^{-1}[2]_q E_i E_j E_i + q^{-2} E_j E_i^2),\\
  T_{i,L}(F_j)&= \frac{1}{[2]_q}(F_j F_i^2  - q [2]_q F_i F_j F_i + q^{2}  F_i^2 F_j)
\end{align*}
for $j\in \{0,1\}$ with $j\neq i$, see \cite[Part VI]{b-Lusztig94}, \cite[Section 2.2]{a-Damiani93}. The additional subscript in the notation $T_{i,L}$ is used to distinguish the Lusztig action from the braid group action on $\cb_c$ defined in Section \ref{sec:auto}.
\begin{rem}\label{rem:not-mere-restriction}
  For $i\in \{0,1\}$ we have
  \begin{align*}
    T_{i,L}(\iota'(B_i))=T_{i,L}(E_i-c_iF_iK_i)=-F_iK_i+c_iq^{-2}E_i\neq \iota'(B_i)
  \end{align*}
  and hence the automorphism $T_i$ given in Section \ref{sec:auto} is not merely the restriction of $T_{i,L}$ to the subalgebra $\cb_c$.
\end{rem}  
The automorphism $\Phi$ of $\cb_c$ is the restriction of an automorphism $\Phi: U_q(\slzh)\rightarrow U_q(\slzh)$ defined by $\Phi(X_0)=X_1$, $\Phi(X_1)=X_0$ for $X=E,F,K^{\pm 1}$. For real roots $n\delta+\alpha_0, n\delta+\alpha_1$ with $n\in \N_0$ Damiani defines
\begin{align*}
  E_{n\delta+\alpha_0}&= (T_{0,L}\Phi)^n(E_0), \qquad  E_{n\delta+\alpha_1}= (T_{0,L}\Phi)^{-n}(E_1).
\end{align*}
For imaginary roots $m\delta$ with $m\in \N$ Damiani sets
\begin{align}\label{eq:Endelta}
  E_{m\delta} = -E_0 E_{(m-1)\delta+\alpha_1} + q^{-2} E_{(m-1)\delta+\alpha_1} E_0.
\end{align}
Consider the ordering $<_D$ on $\roots_+$ given by
\begin{align*}
  n\delta+\alpha_0 &<_D (m+1)\delta <_D l\delta+\alpha_1,& n\delta+\alpha_0 <_D (n+1)\delta+\alpha_0,\\
  (n+2)\delta& <_D (n+1)\delta, & (n+1)\delta+\alpha_1 <_D n\delta+\alpha_1
\end{align*}
for all $l,m,n\in \N_0$.
\begin{thm}\cite[Section 5, Theorem 2]{a-Damiani93}\label{thm:DamPBW}
  The set of monomials 
    \begin{align*}
      \mathscr{B}^+=\{E_{\gamma_1}^{s_1}\cdot \dots \cdot E_{\gamma_M}^{s_M}\,| \,M\in \N_0,\, s_1,\dots,s_M\in \N,\, \gamma_1<_D\dots <_D \gamma_M\in \roots_+\}
    \end{align*}
  is a PBW-basis of $U^+$.  
\end{thm}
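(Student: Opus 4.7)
The plan is to prove spanning and linear independence separately, using the standard convex-ordering approach to PBW bases for quantum affine algebras. Linear independence reduces to a character count: $U^+$ is $Q^+$-graded with $Q^+ = \N_0 \alpha_0 + \N_0 \alpha_1$, and by the Weyl--Kac denominator identity $\dim U^+_\mu$ equals the affine Kostant partition function $\mathcal{P}(\mu)$, where each imaginary root $m\delta$ contributes multiplicity one (since the underlying finite root system has rank one). The elements of $\mathscr{B}^+ \cap U^+_\mu$ are precisely indexed by such partitions of $\mu$, so equality of cardinalities reduces the theorem to showing that $\mathscr{B}^+$ spans $U^+$.

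The spanning is established via a Levendorskii--Soibelman type convexity statement: for every $\alpha < \beta$ in $\roots_+$,
\begin{equation*}
E_\alpha E_\beta - q^{(\alpha,\beta)} E_\beta E_\alpha \in \mathrm{span}_{\Q(q)}\{\text{ordered monomials in } E_\gamma \text{ with } \alpha<\gamma<\beta\}.
\end{equation*}
Given this, a routine straightening argument rewrites any word in $E_0, E_1$ as a linear combination of ordered monomials in $\mathscr{B}^+$, so $\mathscr{B}^+$ spans.

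The convexity itself is proved in cases. For two real roots, one applies a suitable power of $T_{0,L}$ or $T_{1,L}^{-1}$ to transport an arbitrary pair into one of a finite list of base cases, which are handled by the $q$-Serre relations; the key input is that Lusztig's braid operators preserve commutation relations up to reordering because the chosen ordering on the real roots is the one associated to an infinite reduced expression in the affine Weyl group. For the mixed case (one real, one imaginary), the recursive definition \eqref{eq:Endelta} reduces a commutator $[E_{m\delta}, E_{n\delta+\alpha_i}]$ to an expression involving only real root vectors, whose expansion is controlled by the real--real case and an induction on $m$. For two imaginary roots, one proves $[E_{m\delta}, E_{n\delta}] = 0$ by induction on $m+n$ using the previous case and the Jacobi identity.

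The main obstacle is the imaginary--imaginary commutativity together with the precise form of the mixed commutators $[E_{m\delta}, E_{n\delta+\alpha_i}]$: these two must be proved simultaneously by a carefully staged induction, since each step of one feeds into the other, and keeping track of the lower-order correction terms so that they remain expressible in the ordered basis is the most delicate part of the argument. All other pieces (characters, base cases for real--real, and the reordering procedure) are essentially routine once the convexity is in hand.
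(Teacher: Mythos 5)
This theorem is quoted from Damiani \cite{a-Damiani93} and the paper supplies no proof of its own, so the only meaningful comparison is with Damiani's original argument, which your sketch reproduces essentially step for step: Levendorskii--Soibelman-type convexity relations for the chosen convex order, established by braid-group transport for pairs of real roots and by a staged induction interleaving the mixed and imaginary--imaginary commutators, followed by a straightening argument for spanning and a graded dimension count against the (multiplicity-one) affine Kostant partition function for independence. No discrepancy to report.
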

The above theorem remains valid for an arbitrary total ordering on $\roots_+$. Indeed, Damiani shows in \cite[Section 4]{a-Damiani93} that for any $\alpha,\beta\in \roots_+$ with $\beta<_D \alpha$ there exist $a_{\alpha \beta}\in \Q(q)\setminus \{0\}$ and $a_{\alpha \beta}^\gamma, b_{\alpha \beta}^{\gamma \delta}\in \Q(q)$ such that
\begin{align}\label{eq:Damiani-LS}
  E_\alpha E_\beta - a_{\alpha \beta} E_\beta E_\alpha = \sum_{\beta<_D\gamma<_D\alpha} a_{\alpha \beta}^\gamma E_\gamma + \sum_{\beta<_D\gamma\le_D\delta<_D\alpha} b_{\alpha \beta}^{\gamma \delta} E_\gamma E_\delta.
\end{align}
In view of the above commutation relation, Theorem \ref{thm:DamPBW} has the following consequence.
\begin{cor}\label{cor:DamPBW}
   Let $<$ be any total ordering on $\roots_+$. The set of ordered monomials 
    \begin{align*}
      \mathscr{B}^+=\{E_{\gamma_1}^{s_1}\cdot \dots \cdot E_{\gamma_M}^{s_M}\,| \,M\in \N_0,\, s_1,\dots,s_M\in \N,\, \gamma_1<\dots < \gamma_M\in \roots_+\}
    \end{align*}
  is a PBW-basis of $U^+$.  
\end{cor}  
\subsection{A natural filtration of $\cb_c$}
  Define an $\N_0$-filtration $\cf^\ast$ on $\cb_c$ such that $\cf^n\cb_c$ consists of the linear span of all monomials of degree at most $n$ in the generators $B_0, B_1$ of $\cb_c$, in other words
  \begin{align*}
    \cf^n\cb_c=\mathrm{Lin}_{\Q(q)}\{B_{i_1}\dots B_{i_k}\,|\, k\le n, \, i_j\in \{0,1\}\, \mbox{for}\, j=1,\dots,k\}.
  \end{align*} 
Let $\mathrm{Gr}_\cf(\cb_c)$ denote the associated graded algebra. Similarly, there exists a filtration $\cg^\ast$ of $U^+$ such that $\cg^n U^+$ consists of the linear span of all monomials of degree at most $n$ in the generators $E_0, E_1$ of $U^+$. It follows from \cite[Proposition 6.2]{a-Kolb14} that
\begin{align}\label{eq:dim-estimate}
  \dim_{\Q(q)}\cf^n \cb_c \ge \dim_{\Q(q)}\cg^n U^+.
\end{align}
The defining relations \eqref{eq:qDG} of $\cb_c$ imply that the associated graded algebra $\mathrm{Gr}_\cf(\cb_c)$ is a quotient of the graded algebra $U^+$. This fact, together with relation \eqref{eq:dim-estimate} implies that
\begin{align}\label{eq:GrU+}
  \mathrm{Gr}_{\cf}(\cb_c)\cong U^+
\end{align}
which means in particular that equality holds in \eqref{eq:dim-estimate}.
For any $n\in \N_0$ let 
  \begin{align*}
    \pi_n:\cf^n \cb_c \rightarrow \cf^n\cb_c / \cf^{n-1}\cb_c
  \end{align*}
denote the natural projection map. Using \eqref{eq:GrU+} we consider the image of the map $\pi_n$ as a subset of $U^+$.  
\subsection{Comparison of root vectors for $\cb_c$ with root vectors for $U^+$}
For any $\gamma=n_0\alpha_0 + n_1 \alpha_1\in \roots_+$ let $n_\gamma=n_0+n_1$ denote the height of $\gamma$.
\begin{prop} \label{prop:grad-fil}
  Let $\gamma\in \roots_+$. The root vector $B_\gamma\in \cb_c$ has the following properties. 
  \begin{enumerate}
    \item\label{it1} $B_\gamma\in \cf^{n_\gamma}\cb_c \setminus \cf^{n_\gamma-1}\cb_c$. 
    \item\label{it2} $\pi_{n_\gamma}(B_\gamma)=\begin{cases}
                             c^{-n} E_\gamma & \mbox{if $\gamma=n\delta+\alpha_0$ or $\gamma=n\delta+\alpha_1$ for some $n\in \N_0$,}\\
                             c^{-m+1} E_\gamma & \mbox{if $\gamma=m \delta$ for some $m\in \N$.}
                                    \end{cases}$
  \end{enumerate}
\end{prop}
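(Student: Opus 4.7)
The plan is to prove (1) and (2) simultaneously by induction on $n_\gamma$, using the isomorphism $\mathrm{Gr}_\cf(\cb_c)\cong U^+$ from \eqref{eq:GrU+} at each step to reduce top-degree identifications in $\cb_c$ to known facts about Damiani's root vectors $E_\gamma\in U^+$. Observe that (2) implies (1): if $\pi_{n_\gamma}(B_\gamma)$ is a nonzero scalar multiple of $E_\gamma$, then $E_\gamma\ne 0$ by Theorem~\ref{thm:DamPBW} forces $B_\gamma\notin \cf^{n_\gamma-1}\cb_c$. So the task reduces to (2), and the base cases are immediate: $B_{\alpha_i}=B_i$ satisfies $\pi_1(B_i)=E_i$ with scalar $c^0=1$, while $B_\delta=-B_0B_1+q^{-2}B_1B_0$ gives $\pi_2(B_\delta)=E_\delta$ by the $m=1$ case of Damiani's formula \eqref{eq:Endelta}.

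For the real root vectors I induct on $n\ge 0$ using the three-term recurrence extracted from Lemma~\ref{lem:T1Bdel}(2),
\begin{equation*}
B_{(n+1)\delta+\alpha_0}=\frac{1}{c[2]_q}[B_\delta,B_{n\delta+\alpha_0}]+B_{(n-1)\delta+\alpha_0},
\end{equation*}
with the convention $B_{-\delta+\alpha_0}=B_1$ from Remark~\ref{rem:Bminus-delta}. The inductive hypothesis places the commutator in $\cf^{2n+3}\cb_c$, while $B_{(n-1)\delta+\alpha_0}\in \cf^{2n-1}\cb_c$ is a strictly lower-order correction; projecting onto the graded algebra yields
\begin{equation*}
\pi_{2n+3}(B_{(n+1)\delta+\alpha_0})=\frac{c^{-(n+1)}}{[2]_q}[E_\delta,E_{n\delta+\alpha_0}].
\end{equation*}
Invoking the corresponding identity $[E_\delta,E_{n\delta+\alpha_0}]=[2]_q E_{(n+1)\delta+\alpha_0}$ in $U^+$ (proved in \cite{a-Damiani93}) then gives $\pi_{2n+3}(B_{(n+1)\delta+\alpha_0})=c^{-(n+1)}E_{(n+1)\delta+\alpha_0}$; the second formula of Lemma~\ref{lem:T1Bdel}(2) handles $B_{(n+1)\delta+\alpha_1}$ symmetrically.

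For the imaginary root vectors $B_{m\delta}$ with $m\ge 2$ the definition \eqref{eq:Bndelta} suffices directly: every summand $B_{p\delta+\alpha_1}B_{(m-p-2)\delta+\alpha_1}$ of $C_m$ lies in $\cf^{2m-2}\cb_c$ by induction, so $(q^{-2}-1)C_m$ is strictly below the leading pair $-B_0B_{(m-1)\delta+\alpha_1}+q^{-2}B_{(m-1)\delta+\alpha_1}B_0\in \cf^{2m}\cb_c$. Projecting and applying Damiani's formula \eqref{eq:Endelta} yields $\pi_{2m}(B_{m\delta})=c^{-(m-1)}E_{m\delta}$, which completes the induction.

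The main obstacle is careful bookkeeping of the scalars $c^{-n}$ and $c^{-m+1}$. Each iteration of the real-root recurrence contributes exactly one factor of $c^{-1}$ through the normalisation in \eqref{eq:T01}; matching this against Damiani's recursion in $U^+$ works because the correction $-B_{(n-1)\delta+\alpha_0}$ in Lemma~\ref{lem:T1Bdel}(2), being of strictly smaller filter degree, vanishes upon passing to top degree. The same lower-order principle governs the imaginary case: the term $(q^{-2}-1)C_m$ is invisible in $\cf^{2m}\cb_c/\cf^{2m-1}\cb_c$, so the top-degree expression matches Damiani's definition \eqref{eq:Endelta} of $E_{m\delta}$ without any further correction.
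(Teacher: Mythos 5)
Your proposal is correct and follows essentially the same route as the paper: induction on $n_\gamma$, the recurrence \eqref{eq:BdeltaBreal1} combined with Damiani's identity $[E_\delta,E_{n\delta+\alpha_0}]=[2]_qE_{(n+1)\delta+\alpha_0}$ for the real roots, and the observation that $(q^{-2}-1)C_m$ has strictly lower filter degree so that \eqref{eq:Bndelta} projects onto \eqref{eq:Endelta} for the imaginary roots. The only cosmetic difference is that you make explicit the (correct) remark that part (2) implies part (1) via $E_\gamma\neq 0$, which the paper leaves implicit.
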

\begin{proof}
  Properties (\ref{it1}) and (\ref{it2}) hold for $B_0, B_1$, and $B_\delta$. One now performs induction on $n_\gamma$. Assume that (\ref{it1}) and (\ref{it2}) hold for all $\beta\in \roots_+$ with $n_\beta<n_\gamma$. We first consider the case $\gamma=n\delta+\alpha_0$. In this case, by induction hypothesis, one has $B_{(n-2)\delta+\alpha_0}, B_{(n-1)\delta+\alpha_0}\in \cf^{n_\gamma-2}\cb_c$ and hence \eqref{eq:BdeltaBreal1} implies that
  \begin{align*}
    B_{n\delta+\alpha_0}=\frac{1}{c[2]_q}[B_\delta,B_{(n-1)\delta+\alpha_0}]+ B_{(n-2)\delta+\alpha_0}\in \cf^{n_\gamma}\cb_c. 
  \end{align*}
Moreover, comparing \eqref{eq:BdeltaBreal1} with the relation
  \begin{align*}
    [E_\delta,E_{n\delta+\alpha_0}]=[2]_q E_{(n+1)\delta+\alpha_0}
  \end{align*}
given in \cite[p.~299]{a-Damiani93} one obtains
\begin{align*}  
  \pi_{n_\gamma}(B_{n\delta+\alpha_0})&= \frac{1}{c[2]_q}\big[ \pi_{2}(B_\delta),\pi_{n_\gamma-2}(B_{(n-1)\delta+\alpha_0})\big]\\
  &= \frac{1}{c[2]_q}[E_\delta, c^{-(n-1)}E_{(n-1)\delta+\alpha_0}]\\
  &= c^{-n}E_{n\delta+\alpha_0}.
\end{align*}  
In particular, $B_{n\delta+\alpha_0}\notin \cf^{n_\gamma-1}\cb_c$. This completes the proof of Properties (\ref{it1}) and (\ref{it2}) for $\gamma=n\delta+\alpha_0$. In the case $\gamma=n\delta+\alpha_1$ the statement is proved analogously and hence it holds for all real roots. 

If $\gamma=m\delta$ is an imaginary root then Properties (\ref{it1}) and (\ref{it2}) for real roots and \eqref{eq:Bndelta} imply that $B_{m\delta}\in \cf^{n_\gamma}\cb_c$. Moreover, comparing \eqref{eq:Bndelta} with \eqref{eq:Endelta} one obtains
\begin{align*}
  \pi_{n_\gamma}(B_{m\delta})&= -\pi_{1}(B_0)\pi_{n_\gamma-1}(B_{(m-1)\delta+\alpha_0}) + q^{-2}
  \pi_{n_\gamma-1}(B_{(m-1)\delta+\alpha_0})\pi_{1}(B_0)\\
  &= c^{-m+1}E_{m\delta}.
\end{align*}  
This completes the proof of the proposition.
\end{proof}
As a consequence of Proposition \ref{prop:grad-fil} we immediately obtain the desired PBW theorem for $\cb_c$. 
\begin{thm}\label{thm:PBW}
  Let $<$ denote any total ordering on $\roots_+$. The set of ordered monomials
    \begin{align*}
      \mathscr{B}=\{B_{\gamma_1}^{s_1}\cdot \dots \cdot B_{\gamma_M}^{s_M}\,| \,M\in \N_0,\, s_1,\dots,s_M\in \N_0,\, \gamma_1<\dots < \gamma_M\in \roots_+\}
    \end{align*}
  is a basis of $\cb_c$. 
\end{thm}
\begin{proof}
  For $n\in \N$ consider the set of monomials
   \begin{align*}
      \mathscr{B}_n=\{B_{\gamma_1}^{s_1}\cdot \dots \cdot B_{\gamma_M}^{s_M}\in \mathscr{B}\,| \, \sum_{j=1}^M s_j n_{\gamma_j}\le n\}.
   \end{align*}
   Proposition \ref{prop:grad-fil}(1) implies that $\mathscr{B}_n\subseteq \cf^n\cb_c$. By Proposition \ref{prop:grad-fil}(2) and the PBW Theorem given in Corollary \ref{cor:DamPBW} for $U^+$ the elements of $\mathscr{B}_n$ are linearly independent. Moreover, again by Corollary \ref{cor:DamPBW}, the set $\mathscr{B}_n$ contains $\dim_{\Q(q)}(\cg^n U^+)=\dim_{\Q(q)}(\cf^n \cb_c)$ many elements. Hence $ \mathscr{B}_n$ is a basis of $\cf^n \cb_c$ and the theorem follows. 
\end{proof}
Note that $\roots_+$ is a representative set for the quotient $\Rbar$ defined by \eqref{eq:Rbar}. Hence Theorem \ref{thm:PBW} implies Theorem I as stated in the introduction.
\section{Commutation relations}\label{sec:commutators}
We now turn to the study of commutation relations inside $\cb_c$. This provides $q$-analogs of the classical Onsager relations \eqref{eq:Onsager}. Again one can mimic Damiani's approach to establish commutation relations for the root vectors $B_{\ogamma}$ for $\ogamma\in \Rbar$.
\subsection{A choice of ordering of $\Rbar$}\label{sec:choiceR+}
In Damiani's setting it was beneficial to work with the total ordering $<_D$ of $\roots_+$ because it has the convexity property \eqref{eq:Damiani-LS}. Recall that $\roots_+$ is a representative set for the quotient $\Rbar$ and hence we can identify $\roots_+$ and $\Rbar$. In our setting it is preferable to work with an ordering $<$ of $\Rbar$ for which the basis $\mathscr{B}$ in Theorem \ref{thm:PBW} is invariant under the application of $T_0\Phi$.
This was pointed out to us by one of the referees and simplifies the subsequent discussion significantly. Recall that $(\Z\delta+\alpha_1)\cup \N\delta$ is another representative set of $\Rbar$. We will show in Corollaries \ref{cor:Bdel-commute} and \ref{cor:Bdel-inv} that the imaginary root vectors $B_{m\delta}$ commute pairwise and are invariant under $T_0\Phi$. By \eqref{eq:Bndelta+alpha} we have $(T_0\Phi)(B_{n\delta+\alpha_1})=B_{(n-1)\delta+\alpha_1}$ for all $n\in \Z$. Hence total orderings of $\Rbar\cong(\Z\delta+\alpha_1)\cup \N\delta$ with the desired $T_0\Phi$-invariance are determined by two choices:
\begin{enumerate}
  \item Either the real roots are less than the imaginary roots, or vice versa.
  \item Either $(n+1)\delta+\alpha_1< n\delta+\alpha_1$ for all $n\in \Z$, or vice versa.
\end{enumerate}
The following subtle remark was provided to us by the referee. It shows that the choices in (1) and (2) above are essentially equivalent.
\begin{rem}\label{rem:referee}
  1) It follows from the defining relations \eqref{eq:qDG} of $\cb_c$ that there exists an involutive $\Q(q)$-algebra antiautomorphism $\Psi:\cb_c\rightarrow \cb_c$ fixing $B_0$ and $B_1$. The antiautomorphism $\Psi$ commutes with the automorphism $\Phi$ and has the property that $T_0\Psi=\Psi T_0^{-1}$. Hence
  \begin{align*}
\Phi\Psi(B_{n\delta+\alpha_1})=\Phi\Psi(T_0\Phi)^{-n}(B_1)=(T_0\Phi)^n\Phi(B_1)= B_{n\delta+\alpha_0}=B_{-(n+1)\delta+\alpha_1}
  \end{align*}
  and using the definition \eqref{eq:Bndelta} of $B_{n\delta}$ and the fact that $\Phi\Psi$ is an $\Q(q)$-algebra antiautomorphism one obtains  
  \begin{align*}
    \Phi\Psi(B_{n\delta})=(T_0 \Phi)^{n-1} (B_{n\delta}).
  \end{align*}
  Hence, once we have established that $B_{n\delta}$ is invariant under $T_0\Phi$ in Corollary \ref{cor:Bdel-inv}, we see that $\Phi\Psi$ interchanges the choice in (1) and preserves the choice in (2).

  2)  Assume that the rational function $c=c(q)\in \Q(q)$ satisfies the relation $c(q^{-1})=q^{2a}c(q)$ for some $a\in \Z$. Then there exists an invertible $\Q$-algebra antiautomorphism $\Lambda:\cb_c\rightarrow \cb_c$ such that $\Lambda(q)=q^{-1}$ and $\Lambda(B_i)=q^{a-1} B_i$ for $i=0,1$. The map $\Lambda$ commutes with $\Phi$ and $T_0$. Hence \eqref{eq:Bndelta+alpha} implies that
  \begin{align*}
     \Lambda(B_{n\delta+\alpha_1}) = q^{a-1} B_{n\delta+\alpha_1} \qquad \mbox{for all $n\in \Z$}
  \end{align*}
and \eqref{eq:Bndelta} implies that
  \begin{align*}
     \Lambda(B_{n\delta})=-q^{2a} B_{n\delta} \qquad \mbox{for all $n\in \N$}.
  \end{align*}
The above relations show that the algebra antiautomorphism $\Lambda$ interchanges the choices in both (1) and (2).  
\end{rem}  
In view of the above remark we now specify the choice in (1) and (2). For the remainder of the paper $<$ will denote the total ordering on $\Rbar\cong (\Z\delta+\alpha_1)\cup \N\delta$ uniquely determined by
\begin{align}\label{eq:order-choice}
  (n+1)\delta+\alpha_1 < n\delta+\alpha_1 < m\delta < (m+1)\delta
\end{align}
for all $n\in \Z$, $m\in \N$.

\subsection{Imaginary root vectors and braid group action}
Recall the definition of $B_{n\delta}$ and $C_n$ from Section \ref{sec:imroot}. 
For $n\ge 2$ one has
  \begin{align*}
    (T_0 \Phi)^{-1}(C_{n-1})= \sum_{m=1}^{n-2}B_{m\delta+\alpha_1} B_{(n-m-1)\delta+\alpha_1}
  \end{align*}
and hence
  \begin{align}\label{eq:Cn+1-T0PhiCn-1}
    C_{n+1} - (T_0 \Phi)^{-1}(C_{n-1})&= B_1 B_{(n-1)\delta+\alpha_1} + B_{(n-1)\delta+\alpha_1} B_1.
  \end{align}
Equation \eqref{eq:Cn+1-T0PhiCn-1} will play a crucial role in the proof of the following lemma.   

\begin{lem}\label{lem:ndeltadelta}
  Let $n\in \N$ and assume that
  \begin{align}\label{ass:nI}
  \begin{cases}
        T_0 \Phi (B_{k\delta})=B_{k\delta} & \mbox{for all $k\le n$,}\\
     [B_\delta,B_{k\delta}]=0 & \mbox{for all $k< n$.}\tag{A${}_n$I}
  \end{cases}   
  \end{align}
  Then   $[B_{n\delta}, B_\delta] =  c [2]_q (\id - T_0 \Phi)(B_{(n+1)\delta})$.
\end{lem}
\begin{proof}
  We use Equations \eqref{eq:BdeltaBreal2} and \eqref{eq:Cn+1-T0PhiCn-1} to calculate
  \begin{align}
    [C_n,B_\delta]&= \sum_{m=0}^{n-2}\big( B_{m\delta+\alpha_1}[B_{(n-m-2)\delta+\alpha_1},B_\delta] + [B_{m\delta+\alpha_1},B_\delta] B_{(n-m-2)\delta+\alpha_1}\big)\nonumber\\
      &= c [2]_q \sum_{m=0}^{n-2}\Big(B_{m\delta+\alpha_1}\big(B_{(n-m-1)\delta+\alpha_1} - B_{(n-m-3)\delta+\alpha_1} \big) \nonumber\\
      & \qquad \qquad \qquad\quad+ \big(B_{(m+1)\delta+\alpha_1} - B_{(m-1)\delta+\alpha_1} \big)B_{(n-m-2)\delta+\alpha_1}\Big)\nonumber\\
      &= c [2]_q \Big(B_1 B_{(n-1)\delta+\alpha_1} + B_{(n-1)\delta+\alpha_1} B_1 - B_{(n-2)\delta+\alpha_1} B_0 - B_0 B_{(n-2)\delta+\alpha_1}\nonumber\\
      &\qquad \qquad\quad + 2\sum_{m=1}^{n-2} \big(B_{m\delta+\alpha_1} B_{(n-m-1)\delta+\alpha_1} - B_{(m-1)\delta+\alpha_1} B_{(n-m-2)\delta+\alpha_1}\big)\Big)\nonumber\\
      &= c [2]_q (\id - T_0 \Phi)\Big(B_1 B_{(n-1)\delta+\alpha_1} + B_{(n-1)\delta+\alpha_1} B_1 \nonumber\\
      &\qquad \qquad \qquad\qquad\qquad +2\sum_{m=1}^{n-2} B_{m\delta+\alpha_1} B_{(n-m-1)\delta+\alpha_1} \Big)\nonumber\\
      &= c [2]_q (\id - T_0 \Phi)
      \big(  C_{n+1} + (T_0 \Phi)^{-1}(C_{n-1}) \big).\label{eq:firstBdelta}
  \end{align}
Similarly one calculates
\begin{align}
  [-B_0 & B_{(n-1)\delta+\alpha_1} +q^{-2}B_{(n-1)\delta+\alpha_1}B_0, B_\delta]\nonumber\\
     =&-B_0 [B_{(n-1)\delta+\alpha_1},B_\delta] - [B_0,B_\delta] B_{(n-1)\delta+\alpha_1} 
        +q^{-2}B_{(n-1)\delta+\alpha_1}[B_0, B_\delta]\nonumber \\
     &+ q^{-2}[B_{(n-1)\delta+\alpha_1},B_\delta]B_0\nonumber\\
     =& c [2]_q\Big(-B_0 (B_{n\delta+\alpha_1}-B_{(n-2)\delta+\alpha_1}) - (B_1-B_{\delta+\alpha_0})B_{(n-1)\delta+\alpha_1} \nonumber\\
     &\,\,\,\,\qquad+ q^{-2}B_{(n-1)\delta+\alpha_1} (B_1-B_{\delta+\alpha_0}) + q^{-2} (B_{n\delta+\alpha_1}-B_{(n-2)\delta+\alpha_1})B_0\Big)\nonumber\\
     =& c [2]_q (\id - T_0 \Phi)\Big(-B_0 B_{n\delta+\alpha_1} + q^{-2}B_{n\delta+\alpha_1} B_0 - B_1 B_{(n-1)\delta+\alpha_1} \label{eq:secondBdelta}   \\
     &\qquad \qquad\qquad \qquad \qquad \qquad \qquad \qquad \qquad \qquad
     + q^{-2} B_{(n-1)\delta+\alpha_1} B_1\Big). \nonumber
  \end{align}
Using $(T_0 \Phi)^{-1}(B_{(n-1)\delta})=B_{(n-1)\delta}$, which holds by \eqref{ass:nI}, one obtains
\begin{align*}
   B_{(n-1)\delta}= - B_1 B_{(n-1)\delta+\alpha_1} + q^{-2} B_{(n-1)\delta+\alpha_1}B_1 + (q^{-2}-1)(T_0 \Phi)^{-1}(C_{n-1}).
\end{align*}
Inserting the above relation into \eqref{eq:secondBdelta} and using again $(\id-T_0\Phi)B_{(n-1)\delta}=0$ one gets
\begin{align}
  [-B_0 & B_{(n-1)\delta+\alpha_1} +q^{-2}B_{(n-1)\delta+\alpha_1}B_0, B_\delta]\nonumber\\
    &=c [2]_q (\id - T_0 \Phi)\Big({-}B_0 B_{n\delta+\alpha_1} {+} q^{-2}B_{n\delta+\alpha_1} B_0 {-}(q^{-2}{-}1)(T_0 \Phi)^{-1}(C_{n-1})\Big).\label{eq:thirdBdelta}
\end{align} 
 Finally, we add up Equations \eqref{eq:firstBdelta} and \eqref{eq:thirdBdelta} and obtain
 \begin{align*}
   [-B_0&B_{(n-1)\delta+\alpha_1} + q^{-2}B_{(n-1)\delta+\alpha_1}B_0 +(q^{-2}{-}1)C_n\,,\,B_\delta] \\
    &=c [2]_q (\id - T_0 \Phi)\big( {-}B_0B_{n\delta+\alpha_1} + q^{-2}B_{n\delta+\alpha_1}B_0 +(q^{-2}{-}1)C_{n+1}  \big)
 \end{align*} 
which completes the proof of the lemma. 
\end{proof}
Lemma \ref{lem:ndeltadelta} shows in particular that if (A${}_k$I) holds for some $k\in \N$ and additionally $[B_{\delta},B_{k\delta}]=0$ then (A${}_{k+1}$I) also holds. This observation has the following consequence.
\begin{cor}\label{cor:ndeltadelta}
  Let $n\in \N$ and assume that $[B_\delta,B_{k\delta}]=0$ for all $k<n$. Then $T_0\Phi(B_{k\delta})=B_{k\delta}$ for $k=1,\dots, n$ and
  \begin{align*}
   [B_{n\delta}, B_\delta] =  c [2]_q (\id - T_0 \Phi)(B_{(n+1)\delta}).
 \end{align*}
\end{cor}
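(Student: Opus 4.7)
The plan is to deduce the corollary from Lemma \ref{lem:ndeltadelta} by a short bootstrap induction on $n$, using the remark made immediately before the corollary. Concretely, I would argue that under the corollary's hypothesis $[B_\delta,B_{k\delta}]=0$ for $k<n$, the full assumption \eqref{ass:nI} is automatically available, so that Lemma \ref{lem:ndeltadelta} applies and yields the displayed formula.

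First I would verify the base case (A${}_1$I), which requires only $T_0\Phi(B_\delta)=B_\delta$. This is exactly the content of Lemma \ref{lem:T1Bdel}(1): that lemma states $T_1(B_\delta)=\tilde B_\delta=\Phi(B_\delta)$ and hence $T_0T_1(B_\delta)=B_\delta$, which rearranges to $T_0\Phi(B_\delta)=B_\delta$. The condition $[B_\delta,B_{k\delta}]=0$ for $k<1$ is vacuous.

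Next I would carry out the inductive step: assume (A${}_k$I) holds for some $1\le k<n$. By the hypothesis of the corollary, $[B_\delta,B_{k\delta}]=0$, so $[B_{k\delta},B_\delta]=0$. Applying Lemma \ref{lem:ndeltadelta} at the index $k$ (the lemma is available because (A${}_k$I) holds) gives
\[
0 = [B_{k\delta},B_\delta] = c[2]_q(\id-T_0\Phi)(B_{(k+1)\delta}),
\]
so $T_0\Phi(B_{(k+1)\delta})=B_{(k+1)\delta}$. Combined with the first part of (A${}_k$I) this extends the fixed-point property to all indices $\le k+1$, while the second part of (A${}_{k+1}$I), namely $[B_\delta,B_{j\delta}]=0$ for $j<k+1$, is again just the corollary's hypothesis. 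Hence (A${}_{k+1}$I) holds, and the induction continues until we reach (A${}_n$I).

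Finally, with (A${}_n$I) established, one invokes Lemma \ref{lem:ndeltadelta} once more to obtain the identity $[B_{n\delta},B_\delta]=c[2]_q(\id-T_0\Phi)(B_{(n+1)\delta})$ claimed in the corollary. There is no real obstacle here, since all the substantive work has already been done in Lemma \ref{lem:T1Bdel}(1) (base case) and Lemma \ref{lem:ndeltadelta} (inductive engine); the corollary is essentially a packaging statement making the hypothesis clean and recursive in $n$.
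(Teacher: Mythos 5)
Your proof is correct and follows essentially the same route as the paper, which states the bootstrap observation (that Lemma \ref{lem:ndeltadelta} upgrades (A${}_k$I) together with $[B_\delta,B_{k\delta}]=0$ to (A${}_{k+1}$I)) immediately before the corollary and leaves the induction implicit. Your explicit treatment of the base case via Lemma \ref{lem:T1Bdel}(1) and the final application of Lemma \ref{lem:ndeltadelta} at index $n$ is exactly what the authors intend.
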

Lemma \ref{lem:ndeltadelta} also allows us to rewrite the term $C_m$ given in \eqref{eq:Cn} in ordered form with respect to the ordering $<$ of $\Rbar$ defined in Section \ref{sec:choiceR+}.
For any real number $x\in \R$ we write $[x]$ to denote the largest integer less than or equal to $x$.
\begin{prop}\label{prop:Cm-ordered}
  Let $n\in \N$ and assume that $[B_\delta,B_{k\delta}]=0$ for all $k<n$. Then for all $m\in \N$ with $m \le n+2$ the relation
  \begin{align}\label{eq:Cm-ordered}
    C_m = -\sum_{p=1}^{[\frac{m-1}{2}]}q^{-2(p-1)}B_{(m-2p)\delta} + \sum_{p=1}^{[\frac{m}{2}]}a^m_p B_{(m-p-1)\delta+\alpha_1} B_{(p-1)\delta+\alpha_1}
  \end{align}
  holds, where the coefficients $a_p^m$ are given by
  \begin{align}\label{eq:apm}
    a_p^m=\begin{cases}
       q^{-2(p-1)}(1+q^{-2}) & \mbox{if $p=1,2 \dots,[\frac{m-1}{2}]$,}\\
       q^{-m+2}              & \mbox{if $m$ is even and $p=\frac{m}{2}$.}
          \end{cases} 
  \end{align}
\end{prop}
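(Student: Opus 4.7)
The plan is to argue by induction on $m$. The base cases $m=1$ and $m=2$ follow at once from \eqref{eq:Cn}: $C_1=0$ matches the empty right-hand side of \eqref{eq:Cm-ordered}, and $C_2=B_1^2$ matches the single term from $p=m/2=1$ with coefficient $a^2_1=q^{-m+2}=1$ via the second case of \eqref{eq:apm}.

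For $m\ge 3$ I will use the recursion \eqref{eq:Cn+1-T0PhiCn-1} with $n$ replaced by $m-1$,
\begin{align*}
C_m = (T_0\Phi)^{-1}(C_{m-2}) + B_1 B_{(m-2)\delta+\alpha_1} + B_{(m-2)\delta+\alpha_1} B_1,
\end{align*}
combined with a key identity obtained by applying $(T_0\Phi)^{-1}$ to the defining relation \eqref{eq:Bndelta} for $B_{(m-2)\delta}$. Since $m-2\le n$, Corollary \ref{cor:ndeltadelta} guarantees \eqref{ass:nI}, so $(T_0\Phi)^{-1}$ fixes $B_{(m-2)\delta}$; together with $(T_0\Phi)^{-1}(B_0)=B_1$ and $(T_0\Phi)^{-1}(B_{(m-3)\delta+\alpha_1})=B_{(m-2)\delta+\alpha_1}$, this yields
\begin{align*}
[B_1,B_{(m-2)\delta+\alpha_1}]_{q^{-2}} = -B_{(m-2)\delta} + (q^{-2}-1)(T_0\Phi)^{-1}(C_{m-2}).
\end{align*}
Substituting this identity into the recursion eliminates $B_1 B_{(m-2)\delta+\alpha_1}$ and collapses the right-hand side to
\begin{align*}
C_m = q^{-2}(T_0\Phi)^{-1}(C_{m-2}) - B_{(m-2)\delta} + (1+q^{-2})\,B_{(m-2)\delta+\alpha_1} B_1.
\end{align*}

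The last step is to substitute the inductive hypothesis for $C_{m-2}$ into this expression and then apply $(T_0\Phi)^{-1}$. All imaginary root vectors $B_{k\delta}$ appearing in the formula for $C_{m-2}$ have $k\le m-4\le n$ and are therefore fixed by \eqref{ass:nI}, while each $B_{j\delta+\alpha_1}$ is sent to $B_{(j+1)\delta+\alpha_1}$. Reindexing the two shifted sums and combining them with the isolated terms $-B_{(m-2)\delta}$ and $(1+q^{-2})B_{(m-2)\delta+\alpha_1}B_1$ produces an expression of exactly the shape \eqref{eq:Cm-ordered}.

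I expect the main (purely technical) obstacle to be the coefficient verification. Three checks are required: the imaginary contribution $q^{-2}\cdot(\text{shifted sum}) - B_{(m-2)\delta}$ must reassemble into $-\sum_{p=1}^{[(m-1)/2]} q^{-2(p-1)} B_{(m-2p)\delta}$ via a telescoping re-index; the extra summand $(1+q^{-2})B_{(m-2)\delta+\alpha_1}B_1$ must match the $p=1$ contribution with $a^m_1=1+q^{-2}$; and for $2\le p\le[m/2]$ one needs the recursion $a^m_p = q^{-2}a^{m-2}_{p-1}$. The last identity requires a small parity case analysis, since both cases of \eqref{eq:apm} can occur and the boundary value $a^m_{m/2}=q^{-m+2}$ (only for $m$ even) must be produced correctly from $a^{m-2}_{(m-2)/2}=q^{-m+4}$, while the generic case $a^{m-2}_{p-1}=q^{-2(p-2)}(1+q^{-2})$ must propagate to $a^m_p=q^{-2(p-1)}(1+q^{-2})$ across the entire range $p\le[(m-1)/2]$.
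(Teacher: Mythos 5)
Your proof is correct and is essentially the paper's own argument: the authors also induct in steps of two, combining the recursion \eqref{eq:Cn+1-T0PhiCn-1} with the $T_0\Phi$-invariance of the imaginary root vector (via Corollary \ref{cor:ndeltadelta}) applied to \eqref{eq:Bndelta} to obtain exactly your intermediate identity $C_{m+2}=-B_{m\delta}+(1+q^{-2})B_{m\delta+\alpha_1}B_1+q^{-2}(T_0\Phi)^{-1}(C_m)$, and then verify the coefficient recursion $a^m_1=1+q^{-2}$, $a^m_p=q^{-2}a^{m-2}_{p-1}$. The coefficient checks you flag at the end are routine and are treated at the same level of brevity in the paper.
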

\begin{proof}
As observed in Subsection \ref{sec:imroot}, Equation \eqref{eq:Cm-ordered} holds for $m=1, 2$. Assume now that \eqref{eq:Cm-ordered} holds for a given $m\le n$. By Corollary \ref{cor:ndeltadelta} one obtains $(T_0\Phi)^{-1}(B_{m\delta})=B_{m\delta}$. Hence 
\begin{align*}
  B_{m\delta} = -B_1 B_{m\delta+\alpha_1} + q^{-2} B_{m\delta+\alpha_1} B_1 + (q^{-2}-1) (T_0\Phi)^{-1}(C_m).
\end{align*}
On the other hand \eqref{eq:Cn+1-T0PhiCn-1} implies that
\begin{align*}
  C_{m+2}=B_1 B_{m\delta+\alpha_1} + B_{m\delta+\alpha_1} B_1 + (T_0\Phi)^{-1}(C_m).
\end{align*}
Adding the above two relations one obtains 
\begin{align*}
  C_{m+2} = -B_{m\delta} + (1+q^{-2}) B_{m\delta+\alpha_1} B_1 + q^{-2} (T_0 \Phi)^{-1}(C_m).
\end{align*}
Using the induction hypothesis for $C_m$ this becomes 
\begin{align*}
  C_{m+2} =& -B_{m\delta} + (1+q^{-2}) B_{m\delta+\alpha_1} B_1 -  \sum_{p=1}^{[\frac{m-1}{2}]}q^{-2p}B_{(m-2p)\delta}\\
     & \qquad + q^{-2}\sum_{p=1}^{[\frac{m}{2}]}a^m_p B_{(m-p)\delta+\alpha_1} B_{p\delta+\alpha_1}\\
     =&  -\sum_{p=1}^{[\frac{m+1}{2}]}q^{-2(p-1)}B_{(m+2-2p)\delta} + \sum_{p=1}^{[\frac{m+2}{2}]}b^{m+2}_p B_{(m-p+1)\delta+\alpha_1} B_{(p-1)\delta+\alpha_1}
\end{align*}
where $b_1^{m+2}=(1+q^{-2})$ and $b_p^{m+2}=q^{-2}a_{p-1}^m$ for $p= 2,3,\dots,[\frac{m+2}{2}]$. As the coefficients $a^m_p$ are given by \eqref{eq:apm} one obtains that $b^{m+2}_p=a^{m+2}_p$ for $p=1,2,\dots,[\frac{m+2}{2}]$. This concludes the induction.
\end{proof}
\subsection{Commutators of real root vectors}
For $r,s\in \Z$ the product $B_{r\delta+\alpha_1}B_{s\delta+\alpha_1}$ is ordered with respect to  the ordering $<$ of $\Rbar$ if and only if $r\ge s$. We can use the expression \eqref{eq:Bndelta} for $B_{m\delta}$ together with formula \eqref{eq:Cm-ordered} to rewrite $B_{r\delta+\alpha_1} B_{s\delta+\alpha_1}$ for $r<s$ in ordered form. Recall that for any $p\in \Q(q)$ and any $x,y\in \cb_c$ we write
\begin{align*}
  [x,y]_p = xy- p\,yx
\end{align*}
to denote the $p$-commutator of $x$ and $y$.
\begin{prop}\label{prop:comRealReal}
  Let $n\in \N$ and assume that $[B_\delta,B_{k\delta}]=0$ for all $k<n$. For all $m\in \N$ with $m\le n$ and all $r\in \Z$ one has
  \begin{align}
    [B_{r\delta+\alpha_1}, B_{(r+m)\delta+\alpha_1}]_{q^{-2}} = &-B_{m\delta} -  (q^{-2}{-}1) \sum_{p=1}^{[\frac{m-1}{2}]}q^{-2(p-1)}B_{(m-2p)\delta} \label{eq:comRealReal2}\\
    &+ (q^{-2}{-}1) \sum_{p=1}^{[\frac{m}{2}]}a^m_p B_{(m+r-p)\delta+\alpha_1} B_{(r+p)\delta+\alpha_1}\nonumber
  \end{align}
where the coefficients $a^m_p$ are given by \eqref{eq:apm}.
\end{prop}
\begin{proof}
  By Corollary \ref{cor:ndeltadelta} we have $T_0\Phi (B_{m\delta})=B_{m \delta}$. Applying $(T_0\Phi)^{-(r+1)}$ to \eqref{eq:Bndelta} one obtains 
 \begin{align*}
     [B_{r\delta+\alpha_1}, B_{(r+m)\delta+\alpha_1}]_{q^{-2}} = -B_{m\delta} + (q^{-2}-1)(T_0\Phi)^{-(r+1)}(C_m).
  \end{align*}
  Now Equation \eqref{eq:comRealReal2} follows from Proposition \ref{prop:Cm-ordered}.
\end{proof}
\subsection{Commutators involving $B_{n\delta}$}
We now describe the commutators of imaginary root vectors $B_{n\delta}$ with any other root vector $B_{\ogamma}$ for $\ogamma \in \Rbar$. For any $n\in \N$ with $n\ge 2$ define
\begin{align}
  F_n=q^{-2}[C_{n-1},B_{\delta+\alpha_0}] - [T_0\Phi(C_n),B_0] - (q^2-q^{-2})B_{\delta+\alpha_0}C_{n-1}\label{eq:notFn}.
\end{align}
As we will see, the elements $F_n$ play a crucial role in the description of the commutators $[B_{n\delta},B_1]$. The following recursive formula for $F_n$ will be proved in Appendix \ref{app:Fn}.
\begin{lem}\label{lem:Fn}
  Let $n\in \N$ with $n\ge 2$ and assume that $[B_\delta,B_{k\delta}]=0$ for all $k<n$. Then one has
  \begin{align*}
    F_{n+1}= B_1B_{n\delta}+ q^2B_{n\delta}B_1 - B_{(n-1)\delta}B_0 - q^2B_0B_{(n-1)\delta} + (T_0\Phi)^{-1}(F_n).
  \end{align*}  
\end{lem}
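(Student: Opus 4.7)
The proof is by direct computation: I would evaluate $F_{n+1}$ and $(T_0\Phi)^{-1}(F_n)$ from the definition \eqref{eq:notFn} and compare their difference with the claimed $B_1B_{n\delta}+q^2B_{n\delta}B_1 - B_{(n-1)\delta}B_0 - q^2B_0B_{(n-1)\delta}$. The two essential ingredients are the recursion \eqref{eq:Cn+1-T0PhiCn-1} for $C_m$ and the way $T_0\Phi$ acts on the real root vectors.

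First I would apply the algebra automorphism $(T_0\Phi)^{-1}$ to \eqref{eq:notFn} with index $n$. Since $B_{\delta+\alpha_0}=T_0\Phi(B_0)$ one has $(T_0\Phi)^{-1}(B_{\delta+\alpha_0})=B_0$, while $T_0(B_0)=B_0$ and $\Phi(B_0)=B_1$ give $(T_0\Phi)^{-1}(B_0)=B_1$. Distributing the automorphism over commutators this yields
\begin{align*}
(T_0\Phi)^{-1}(F_n) = q^{-2}[(T_0\Phi)^{-1}(C_{n-1}),B_0] - [C_n,B_1] - (q^2-q^{-2})B_0\,(T_0\Phi)^{-1}(C_{n-1}).
\end{align*}
Next I would eliminate the shifted $C_m$'s on both sides of the difference. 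Equation \eqref{eq:Cn+1-T0PhiCn-1} directly expresses $(T_0\Phi)^{-1}(C_{n-1}) = C_{n+1} - B_1B_{(n-1)\delta+\alpha_1} - B_{(n-1)\delta+\alpha_1}B_1$. To handle $T_0\Phi(C_{n+1})$ inside $F_{n+1}$ I would apply $T_0\Phi$ to \eqref{eq:Cn+1-T0PhiCn-1}, using $T_0\Phi(B_1)=B_0$ together with the identity $T_0\Phi(B_{k\delta+\alpha_1})=B_{(k-1)\delta+\alpha_1}$. The latter is valid with Remark \ref{rem:Bminus-delta}'s convention, because $T_1^{-1}\Phi=(T_0\Phi)^{-1}$, so that $B_{k\delta+\alpha_1}=(T_0\Phi)^{-k}(B_1)$ for all $k\in\Z$. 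One obtains $T_0\Phi(C_{n+1})=C_{n-1}+B_0B_{(n-2)\delta+\alpha_1}+B_{(n-2)\delta+\alpha_1}B_0$.

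Substituting both expressions into $F_{n+1}-(T_0\Phi)^{-1}(F_n)$ and expanding the resulting commutators, the contributions from $C_n$, $C_{n-1}$, and $C_{n+1}$ should reorganize, via the defining formula \eqref{eq:Bndelta} of $B_{n\delta}$ and $B_{(n-1)\delta}$, into the desired four-term sum. Throughout, the inductive hypothesis $[B_\delta,B_{k\delta}]=0$ for $k<n$ is crucial: it activates Corollary \ref{cor:ndeltadelta} and lets us invoke Proposition \ref{prop:comRealReal2} for any residual real-real commutators.

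The main obstacle is the bookkeeping in this assembly step. After the substitutions one is left with roughly a dozen products of the shape (real root vector)$\cdot$(real root vector) weighted by various powers of $q$, and only careful tracking of signs and ordering --- together with occasional uses of \eqref{eq:BdeltaBreal1}, \eqref{eq:BdeltaBreal2}, and \eqref{eq:Bndelta} --- will collapse them into the four claimed monomials. No new algebraic idea beyond those already present in Lemma \ref{lem:ndeltadelta} and Proposition \ref{prop:Cm-ordered} is required.
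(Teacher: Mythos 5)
Your setup is correct and agrees with the paper's starting point: applying $(T_0\Phi)^{-1}$ to \eqref{eq:notFn} does give
$(T_0\Phi)^{-1}(F_n)=q^{-2}[(T_0\Phi)^{-1}(C_{n-1}),B_0]-[C_n,B_1]-(q^2-q^{-2})B_0(T_0\Phi)^{-1}(C_{n-1})$,
and your identities $(T_0\Phi)^{-1}(B_{\delta+\alpha_0})=B_0$, $(T_0\Phi)^{-1}(B_0)=B_1$, $T_0\Phi(B_{k\delta+\alpha_1})=B_{(k-1)\delta+\alpha_1}$ are all right. The gap is in the assembly step, which you dismiss as bookkeeping requiring ``no new algebraic idea.'' After your substitutions the difference $F_{n+1}-(T_0\Phi)^{-1}(F_n)$ still contains $q^{-2}[C_n,B_{\delta+\alpha_0}]+[C_n,B_1]-(q^2-q^{-2})B_{\delta+\alpha_0}C_n$ together with $-[T_0\Phi(C_{n+1}),B_0]$. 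To produce the imaginary root vectors in the target you must push $B_{\delta+\alpha_0}$ past each factor $B_{(n-p-3)\delta+\alpha_1}$ of $C_n$ via $T_0\Phi$ applied to \eqref{eq:Bndelta}; this generates not ``roughly a dozen products of two real root vectors'' but $O(n)$ terms of the form $B_{m\delta+\alpha_1}\,T_0\Phi(C_{n-m-1})$ and $T_0\Phi(C_{n-m-1})\,B_{m\delta+\alpha_1}$, which are quartic in the generators. These do not visibly collapse.

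The paper handles exactly this by two ingredients absent from your plan. First, the symmetry identity \eqref{eq:CBBC}, $\sum_{m}C_{n-m}B_{m\delta+\alpha_1}=\sum_{k}B_{k\delta+\alpha_1}C_{n-k}$, whose consequence \eqref{eq:com2} rewrites $[T_0\Phi(C_n),B_0]$ as a difference of such quartic sums so that they combine with the ones above into the single auxiliary element $R_n$ of \eqref{eq:defRn}. Second, Lemma \ref{lem:Rn} converts $R_n$ into a sum of (imaginary)$\cdot$(real) products by a nontrivial pairing of the summands $(n,n-p)$ with $(n,p-2)$, again using \eqref{eq:CBBC}. Only then does one obtain the closed form of Lemma \ref{lem:FnRn} for $F_n$, from which the recursion of Lemma \ref{lem:Fn} follows by splitting off the boundary summands of the two sums. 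So while your route is the intended one in outline, it is incomplete precisely where the entire content of Appendix \ref{app:Fn} lives; without \eqref{eq:CBBC} and the analysis of $R_n$ the computation does not close.
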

The following Lemma provides the essential step to determine the commutators $[B_{n\delta}, B_{r\delta+\alpha_1}]$ for all $n\in \N$, $r\in \Z$.
\begin{lem}\label{lem:B1Bndelta}
  Let $n\in\N$ with $n\ge 3$ and assume that $[B_\delta,B_{k\delta}]=0$ for all $k<n$. Then
  \begin{align}
 [B_{n\delta}, B_1]=& q^{-2}[B_{(n-1)\delta},B_{0}] 
 - [B_{(n-2)\delta}, B_1] + q^{2}[B_{(n-1)\delta},B_{\delta+\alpha_1}]\label{eq:Bn+1prefin}\\
& + (q^2-q^{-2})\big(B_{\delta+\alpha_1}-B_0\big)B_{(n-1)\delta}.\nonumber
  \end{align}
\end{lem}
\begin{proof}
By Corollary \ref{cor:ndeltadelta} we have $T_0\Phi(B_{n\delta})=B_{n\delta}$. Using this and the definition of the element $F_n$ in \eqref{eq:notFn} one calculates
\begin{align}
    [B_{n\delta}, B_0]=& [-B_{\delta+\alpha_0} B_{(n-2)\delta+\alpha_1} + q^{-2}B_{(n-2)\delta+\alpha_1}B_{\delta+\alpha_0} + (q^{-2}{-}1)T_0\Phi(C_n), B_0]\nonumber\\
    =& -B_{\delta+\alpha_0} B_{(n-2)\delta+\alpha_1} B_0 + q^{-2}B_{(n-2)\delta+\alpha_1}B_{\delta+\alpha_0}B_0 \nonumber\\
    &+B_0B_{\delta+\alpha_0} B_{(n-2)\delta+\alpha_1} - q^{-2} B_0 B_{(n-2)\delta+\alpha_1}B_{\delta+\alpha_0}\nonumber\\
    & \qquad+(q^{-2}{-}1)[T_0\Phi(C_n), B_0]\nonumber\\
    =& -q^2 B_{\delta+\alpha_0} B_0 B_{(n-2)\delta+\alpha_1} - q^2 B_{\delta+\alpha_0} B_{(n-1)\delta} + q^2 (q^{-2}{-}1)B_{\delta+\alpha_0} C_{n-1}\nonumber\\
    & +q^{-4}B_{(n-2)\delta+\alpha_1} B_0 B_{\delta+\alpha_0} - q^{-2}  B_{(n-2)\delta+\alpha_1} B_\delta\nonumber\\
    &+q^2 B_{\delta+\alpha_0} B_0 B_{(n-2)\delta+\alpha_1} + q^2 B_\delta B_{(n-2)\delta+\alpha_1}\nonumber\\
    &- q^{-4}  B_{(n-2)\delta+\alpha_1} B_0 B_{\delta+\alpha_0} + q^{-2} B_{(n-1)\delta}B_{\delta+\alpha_0} \nonumber\\
    &- q^{-2} (q^{-2}{-}1) C_{n-1}B_{\delta+\alpha_0} +(q^{-2}{-}1)[T_0\Phi(C_n), B_0] \nonumber\\
    =&    q^{-2} [B_{(n-1)\delta},B_{\delta+\alpha_0}] + (q^{-2}{-}q^2) B_{\delta+\alpha_0}
     B_{(n-1)\delta}\nonumber\\
     & + q^{2}  [B_\delta,B_{(n-2)\delta+\alpha_1}]  - (q^{-2}{-}q^2) B_{(n-2)\delta+\alpha_1} B_\delta\nonumber\\
     & - q^{-2} (q^{-2}{-}1) [C_{n-1},B_{\delta+\alpha_0}] - (q^{-2}-q^2)(q^{-2}-1)B_{\delta+\alpha_0} C_{n-1}\nonumber\\
     & \qquad+(q^{-2}{-}1)[T_0\Phi(C_n), B_0]\nonumber\\
    =& q^{-2} [B_{(n-1)\delta},B_{\delta+\alpha_0}] + q^2  [B_\delta,B_{(n-2)\delta+\alpha_1}]
     \label{eq:B0Bn} \\
& \qquad + (q^2-q^{-2})\big(  B_{(n-2)\delta+\alpha_1} B_\delta -B_{\delta+\alpha_0}
     B_{(n-1)\delta}\big) + (1{-}q^{-2})F_n.\nonumber
\end{align}
Replacing $n$ by $n-1$, we know in particular that
\begin{align}
  (1-q^{-2})F_{n-1}&= [B_{(n-1)\delta}, B_0]- q^{-2}  [B_{(n-2)\delta},B_{\delta+\alpha_0}] - q^2 [B_\delta,B_{(n-3)\delta+\alpha_1}]
  \label{eq:Fn-1}\\
  &\qquad - (q^2-q^{-2})\big( B_{(n-3)\delta+\alpha_1} B_\delta -B_{\delta+\alpha_0}
     B_{(n-2)\delta}\big).\nonumber 
\end{align} 
In view of our assumption $n\ge 3$ we can use Lemma \ref{lem:Fn} to obtain
\begin{align*} (1{-}q^{-2})F_{n}&\stackrel{\phantom{\eqref{eq:Fn-1}}}{=}(1{-}q^{-2})\big(B_1B_{(n{-}1)\delta}+ q^2B_{(n{-}1)\delta}B_1 - B_{(n{-}2)\delta}B_0 - q^2B_0B_{(n{-}2)\delta}\big)\\
  &\qquad\qquad + (1{-}q^{-2})(T_0\Phi)^{-1}(F_{n{-}1})\\
  &\stackrel{\eqref{eq:Fn-1}}{=} (1{-}q^{-2})\big( B_1B_{(n{-}1)\delta}+ q^2B_{(n{-}1)\delta}B_1 - B_{(n{-}2)\delta}B_0 - q^2B_0B_{(n{-}2)\delta}\big)\\
   &\quad + (T_0\Phi)^{-1}  \Big( [B_{(n-1)\delta}, B_0] - 
q^{-2} [B_{(n-2)\delta},B_{\delta+\alpha_0}] - q^2 [B_\delta,B_{(n-3)\delta+\alpha_1}] \nonumber \\
& \qquad \qquad - (q^2-q^{-2})\big( B_{(n-3)\delta+\alpha_1}B_\delta -B_{\delta+\alpha_0} B_{(n-2)\delta}\big)\Big)\nonumber\\
&\stackrel{\phantom{\eqref{eq:Fn-1}}}{=}
(1{-}q^{-2})\left(B_1B_{(n-1)\delta}+ q^2B_{(n-1)\delta}B_1 - B_{(n-2)\delta}B_0 - q^2B_0B_{(n-2)\delta}\right)\nonumber\\
& \qquad \qquad +  [B_{(n-1)\delta}, B_1] - 
q^{-2} [B_{(n-2)\delta},B_0] - q^2[B_\delta,B_{(n-2)\delta+\alpha_1}] \nonumber \\
& \qquad  \qquad- (q^2-q^{-2})\big( B_{(n-2)\delta+\alpha_1} B_\delta -B_0 B_{(n-2)\delta}\big)\nonumber\\
&\stackrel{\phantom{\eqref{eq:Fn-1}}}{=} -[B_{(n-2)\delta}, B_0] +q^2 [B_{(n-1)\delta},B_1] -q^2[B_\delta,B_{(n-2)\delta+\alpha_1}] \nonumber\\
& \qquad \qquad +(q^2-q^{-2})\big(B_1 B_{(n-1)\delta} - B_{(n-2)\delta+\alpha_1} B_\delta).\nonumber
\end{align*}
Replacing the above expression in \eqref{eq:B0Bn} and simplifying, we obtain the recursive formula
\begin{align*}
 [B_{n\delta}, B_0]=& q^{-2}[B_{(n-1)\delta},B_{\delta+\alpha_0}] 
 - [B_{(n-2)\delta}, B_0] + q^2[B_{(n-1)\delta},B_1]\\
& + (q^2-q^{-2})\big(B_1 B_{(n-1)\delta} - B_{\delta+\alpha_0}B_{(n-1)\delta}).
\end{align*}
Now we obtain \eqref{eq:Bn+1prefin} by application of $(T_0\Phi)^{-1}$.
\end{proof}
With the preparation provided by Lemma \ref{lem:B1Bndelta} we are now in a position to determine the commutator between real and imaginary root vectors.
\begin{prop}\label{prop:Bb10Bndelbis}
Let $n\in {\mathbb N}$ and assume that $[B_\delta,B_{k\delta}]=0$ for all $k<n$. Then for all $m\in \N$ with $m\le n$ and all $r\in \Z $ the following relation holds:
\begin{align}
& [B_{m\delta},B_{r\delta+\alpha_1}] = 
 c[2]_q \Bigg(  q^{-2(m-1)} B_{(r-m)\delta +\alpha_1} \label{eq:BndBp02}\\ 
&\quad + (q^2-q^{-2}) \sum_{h=0}^{m-2}      q^{2(m-2-2h) } B_{(r+m-2-2h)\delta+\alpha_1} -q^{2(m-1)} B_{(r+m)\delta +\alpha_1} \Bigg)\nonumber\\
&-(q^2-q^{-2}) \sum_{l=1}^{m-1}   \Bigg(   q^{-2(l-1) } B_{(r-l)\delta+\alpha_1} \nonumber\\
&\quad+(q^2-q^{-2}) \sum_{h=1}^{l-1}      q^{-2(l-2h) } B_{(r+2h-l)\delta+\alpha_1} - q^{2(l-1)}B_{(r+l)\delta+\alpha_1}\Bigg)B_{(m-l)\delta}.\nonumber
\end{align}
\end{prop}
\begin{proof}
  By Corollary \ref{cor:ndeltadelta} we have $T_0\Phi(B_{k\delta})=B_{k\delta}$ for all $k\le n$. For $m=1$ formula \eqref{eq:BndBp02} coincides with \eqref{eq:BdeltaBreal2}.
For $m=2$ we use \eqref{eq:Bndelta} and calculate
\begin{align*}
  [B_{2\delta},B_1] &= [-B_0 B_{\delta+\alpha_1} + q^{-2}B_{\delta+\alpha_1}B_0 +(q^{-2}-1)B_1^2, B_1]\\
  &=-B_0 B_{\delta+\alpha_1} B_1 + q^{-2} B_{\delta+\alpha_1} B_0 B_1 + B_1 B_0 B_{\delta+\alpha_1}  - q^{-2} B_1 B_{\delta+\alpha_1} B_0\\
  &=-q^2 B_0 B_\delta -q^{-2 }B_{\delta+\alpha_1} B_\delta + q^2 B_\delta B_{\delta+\alpha_1} + q^{-2} B_\delta B_0 \\
  &=q^2[B_\delta,B_{\delta+\alpha_1}] + (q^2-q^{-2}) B_{\delta+\alpha_1} B_\delta + q^{-2}[B_\delta,B_0] - (q^2-q^{-2}) B_0 B_\delta.
\end{align*}
By formula \eqref{eq:BdeltaBreal2} we hence obtain
\begin{align*}
  [B_{2\delta},B_1]=&c[2]_q \big(q^{-2} B_{-2\delta+\alpha_1} + (q^2-q^{-2})B_1 - q^2 B_{2\delta+\alpha_1}\big)\\
  &\qquad \qquad\qquad \qquad  - (q^2-q^{-2})(B_{-\delta+\alpha_1} - B_{\delta+\alpha_1}) B_\delta.
\end{align*}
Now we obtain \eqref{eq:BndBp02} for $m=2$ and general $r\in\Z$ by application of $(T_0\Phi)^{-r}$.
Performing induction on $m$ we may hence assume that $n\ge 3$ and that \eqref{eq:BndBp02} holds for all $m<n$. Again, it is enough to determine the commutator $[B_{n\delta}, B_1]$. Using the induction hypothesis and Lemma \ref{lem:B1Bndelta} this commutator is shown to coincide with \eqref{eq:BndBp02} by a tedious but straightforward calculation.
\end{proof}

Finally, we want to show that the commutators $[B_{n\delta},B_{m\delta}]$ vanish. 
This will be achieved by an induction over the set of ordered pairs of natural numbers
\begin{align*}
  \N^2_>=\{(n,m)\in \N\times \N\,|\,n>m\}
\end{align*}
with the lexicographic ordering given by
\begin{align}\label{eq:lex}
    (k,l) <_{lex} (n,m)  \qquad \Longleftrightarrow \qquad  k<n \quad \mbox{or ($k=n$ and $l<m$).}
\end{align}
First, however, we make the following preparatory observation.
\begin{lem}\label{lem:X=0}
  Let $X\in\cb_c$ be a noncommutative polynomial in the generators $B_0$ and $B_1$ without a constant term. If $T_0\Phi(X)=X$ and $[X,B_1]=0$ then $X=0$.  
\end{lem}
\begin{proof}
  If $T_0\Phi(X)=X$ and $[X,B_1]=0$ then $[X,B_0]=(T_0\Phi)([X,B_1])=0$ and hence $X$ lies in the center of $\cb_c$. By \cite[Theorem 8.3]{a-Kolb14} the center of $\qOns$ consists of scalars $\mathbb{Q}(q)1$. By assumption $X$ can be written as a noncommutative polynomial in the generators $B_0, B_1$ without a constant term. Such a polynomial can never be transformed into a scalar using only the $q$-Dolan-Grady relations \eqref{eq:qDG}, unless the polynomial vanishes in $\qOns$.
\end{proof}
The proof of the following proposition was suggested to us by one of the referees, replacing a much longer calculation in a previous version of this paper.
\begin{prop}\label{prop:Bdel-commute}
  Let $n\in \N$ and assume that $[B_\delta,B_{k\delta}]=0$ for all $k<n$. Then for all $m_1, m_2\in \N$ with $1\le m_1, m_2\le n$ the following relations hold:
  \begin{enumerate}
    \item $[B_{m_1\delta},[B_{m_2\delta}, B_1]] = [B_{m_2\delta},[B_{m_1\delta}, B_1]]$.
    \item $[B_{m_1\delta}, B_{m_2\delta}]=0$.
  \end{enumerate}  
\end{prop}
\begin{proof}
  Without loss of generality we may assume that $m_1>m_2$. We perform induction over the set of ordered pairs $\N^2_>$ with the lexicographic ordering given by \eqref{eq:lex}. Assume that $[B_{k\delta},B_{l\delta}]=0$ for all $(k,l)\in \N^2_>$ with $k,l\le n$ and $(k,l)<_{lex}(m_1,m_2)$. For any $m=1,\dots,n$ let $\cb_{c,m}$ denote the $\Q(q)$-subalgebra of $\cb_c$ generated by $\{B_{h\delta}\,|\,1\le h < m\}$. By Proposition \ref{prop:Bb10Bndelbis} for any $m=1,\dots, n $ there exist $X_{k,m}\in \cb_{c,m}$ such that
  \begin{align*}
    [B_{m\delta},B_1] = \sum_{k={-m}}^m (T_0\Phi)^{-k} (B_1) X_{k,m}.
  \end{align*}
  By Corollary \ref{cor:ndeltadelta} the elements $X_{k,m}$ and $B_{m\delta}$ are invariant under $T_0\Phi$ for $m\le n$. Moreover, $B_{m_1\delta}$ and $X_{k,m_2}$ commute by induction hypothesis. Hence
  \begin{align*}
    [B_{m_1\delta},[B_{m_2\delta}, B_1]] &= \sum_{k=-m_2}^{m_2} [B_{m_1\delta} , (T_0\Phi)^{-k}(B_1)X_{k,m_2}]\\
    &= \sum_{k=-m_2}^{m_2} (T_0\Phi)^{-k}\big([B_{m_1\delta},B_1]\big)X_{k,m_2}\\
    &=  \sum_{k=-m_2}^{m_2}  \sum_{h=-m_1}^{m_1} (T_0\Phi)^{-h-k}(B_0) X_{k,m_1} X_{k,m_2}.
  \end{align*}
 The latter expression is symmetric in $m_1$ and $m_2$ because $X_{k,m_1}$ and $X_{k,m_2}$ commute by induction hypothesis. This proves the formula in (1).
   Now the Jacobi identity implies that $[[B_{m_1\delta}, B_{m_2\delta}],B_1]=0$ and hence the relation  $[B_{m_1\delta}, B_{m_2\delta}]=0$ follows from Lemma \ref{lem:X=0}.
\end{proof}  
We now obtain the desired commutation of imaginary root vectors by induction over the ordered set $\N_>^2$ using Proposition \ref{prop:Bdel-commute}.
\begin{cor}\label{cor:Bdel-commute}
  For any $m,n\in \N$ one has $[B_{n\delta},B_{m\delta}]=0$. 
\end{cor}
Corollaries \ref{cor:Bdel-commute} and \ref{cor:ndeltadelta} imply that the imaginary root vectors are fixed by $T_0\Phi$.
\begin{cor}\label{cor:Bdel-inv}
  For any $n\in \N$ one has $T_0\Phi(B_{n\delta})=B_{n\delta}$.
\end{cor}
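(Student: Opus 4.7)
The plan is to invoke Proposition \ref{prop:Bdel-commute} to discharge the hypothesis of Corollary \ref{cor:ndeltadelta} for every $n$. Concretely, fix $n \in \N$. By Proposition \ref{prop:Bdel-commute} the commutators $[B_\delta, B_{k\delta}]$ vanish for all $k \in \N$, so in particular the hypothesis of Corollary \ref{cor:ndeltadelta} is satisfied with $n$ replaced by any integer $\ge 1$.

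Applying Corollary \ref{cor:ndeltadelta} with the parameter $n+1$ (so that the hypothesis reads $[B_\delta, B_{k\delta}] = 0$ for all $k < n+1$, which is what Proposition \ref{prop:Bdel-commute} guarantees) shows that assumption \eqref{ass:nI}, in the form (A${}_{n+1}$I), holds. The first clause of (A${}_{n+1}$I) asserts $T_0 \Phi(B_{k\delta}) = B_{k\delta}$ for all $k \le n+1$, and taking $k = n$ yields the desired identity $T_0 \Phi(B_{n\delta}) = B_{n\delta}$.

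There is essentially no obstacle here, since Proposition \ref{prop:Bdel-commute} is precisely the input needed to unlock the full strength of Lemma \ref{lem:ndeltadelta} and its corollary. The substantive work has already been done in establishing the pairwise commutativity of the imaginary root vectors; the corollary is merely an immediate consequence obtained by feeding this commutativity back into the inductive mechanism that defined the assumption (A${}_n$I).
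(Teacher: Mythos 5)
Your proof is correct and is exactly the paper's argument: the paper derives this corollary in one line by combining Proposition \ref{prop:Bdel-commute} (which discharges the hypothesis $[B_\delta,B_{k\delta}]=0$ for all $k$) with Corollary \ref{cor:ndeltadelta}, whose conclusion (A${}_n$I) contains the statement $T_0\Phi(B_{k\delta})=B_{k\delta}$ for $k\le n$. Your use of the parameter $n+1$ rather than $n$ is harmless but unnecessary, since applying Corollary \ref{cor:ndeltadelta} with parameter $n$ already yields the claim for $k=n$.
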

Moreover, Corollary \ref{cor:Bdel-commute} allows us to lift the assumption
\begin{align*}
  [B_\delta, B_{k\delta}]=0 \qquad \mbox{for all $k<n$}
\end{align*}
from Propositions \ref{prop:Cm-ordered}, \ref{prop:comRealReal}, \ref{prop:Bb10Bndelbis}, \ref{prop:Bdel-commute}, from Corollary \ref{cor:ndeltadelta} and from Lemmas \ref{lem:Fn}, \ref{lem:B1Bndelta}.
\begin{cor}\label{cor:q-Ons-rels}
  The relations \eqref{eq:comRealReal2} and \eqref{eq:BndBp02} hold for all $r\in \Z$, $m\in\N$.
\end{cor}
The relations in Corollaries \ref{cor:Bdel-commute} and \ref{cor:q-Ons-rels} provide the desired $q$-analogs of the Onsager relations \eqref{eq:Onsager} and imply Theorem II in the introduction. The explicit form of the terms $C^{\real}_{r,m}$ and $C^{\ima}_{r,m}$ can be read off from Propositions \ref{prop:comRealReal} and \ref{prop:Bb10Bndelbis}.


\begin{appendix}
\section{Proof of Lemma \ref{lem:Fn}}\label{app:Fn}
For any $n\in \N$ define
\begin{align}\label{eq:defRn}
   R_n= q^2 \sum_{m=0}^{n-3} T_0\Phi(C_{n-m-1})B_{m\delta+\alpha_1}
 -q^{-2} \sum_{m=0}^{n-3} B_{m\delta+\alpha_1} T_0\Phi(C_{n-m-1}).
\end{align}
The element $R_n$ will appear in the proof of Lemma \ref{lem:Fn}. Observe that $R_1=R_2=0$ and that
\begin{align*}
  R_3=q^2 B_0^2B_1 - q^{-2}B_1 B_0^2. 
\end{align*}
Using the relation $B_\delta=q^{-2}B_1 B_0 - B_0 B_1$ one can rewrite the above expression as
\begin{align}\label{eq:R3}
  R_3=-B_\delta B_0 - q^2 B_0 B_\delta.
\end{align}
This formula has a generalization for all $n\in \N$.
\begin{lem}\label{lem:Rn}
For any $n\in \N$ one has
\begin{align}\label{eq:recRn}
\qquad R_n= - \sum_{m=0}^{n-3} \big( B_{(n-m-2)\delta}B_{(m-1)\delta+\alpha_1}
 + q^2 B_{(m-1)\delta+\alpha_1} B_{(n-m-2)\delta}\big).
\end{align}
\end{lem}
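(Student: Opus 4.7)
The plan is to argue by induction on $n$. The cases $n=1,2$ are trivial, since both sides are empty sums, and the case $n=3$ has already been dealt with in \eqref{eq:R3} by direct substitution of $B_\delta = q^{-2}B_1 B_0 - B_0 B_1$.

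For the inductive step the main tool will be the subsidiary identity
\begin{align*}
T_0\Phi(C_k) = C_{k-2} + B_0 B_{(k-3)\delta+\alpha_1} + B_{(k-3)\delta+\alpha_1}B_0 \qquad (k \ge 3),
\end{align*}
obtained by applying $T_0\Phi$ to \eqref{eq:Cn+1-T0PhiCn-1} and using $T_0\Phi(B_{j\delta+\alpha_1}) = B_{(j-1)\delta+\alpha_1}$ together with the convention $B_{-\delta+\alpha_1}=B_0$ from \remref{rem:Bminus-delta}. The boundary value $T_0\Phi(C_2)=B_0^2$ will be recorded separately.

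Substituting the formula for $T_0\Phi(C_{n-m-1})$ (with $k=n-m-1\ge 3$) into \eqref{eq:defRn}, and handling the summand $m=n-3$ separately via $T_0\Phi(C_2)=B_0^2$, one decomposes $R_n$ as the sum of a \emph{$C$-piece} of the shape
\begin{align*}
q^2 \sum_{m=0}^{n-4} C_{n-m-3}B_{m\delta+\alpha_1} - q^{-2}\sum_{m=0}^{n-4} B_{m\delta+\alpha_1}C_{n-m-3}
\end{align*}
and two \emph{triple-product pieces} involving $B_0 B_{(n-m-4)\delta+\alpha_1}$ and $B_{(n-m-4)\delta+\alpha_1}B_0$, plus the boundary contribution $q^2 B_0^2 B_{(n-3)\delta+\alpha_1} - q^{-2}B_{(n-3)\delta+\alpha_1}B_0^2$. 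Applying $T_0\Phi$ to the $C$-piece and shifting the summation index converts it, up to a residual boundary term $q^2 T_0\Phi(C_{n-3})B_0 - q^{-2}B_0 T_0\Phi(C_{n-3})$, into $R_{n-3}$, to which the induction hypothesis applies; transporting the result back by $T_0\Phi^{-1}$ yields an explicit expression for the $C$-piece in terms of ordered imaginary-real products plus a $C_{n-3}$-correction. The triple-product pieces are then rewritten using
\begin{align*}
B_0 B_{(k-1)\delta+\alpha_1} - q^{-2}B_{(k-1)\delta+\alpha_1}B_0 = -B_{k\delta} + (q^{-2}-1)C_k,
\end{align*}
an immediate rearrangement of \eqref{eq:Bndelta}, which introduces the imaginary root vectors $B_{(n-m-2)\delta}$ appearing on the right-hand side of \eqref{eq:recRn}.

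The principal obstacle will be the bookkeeping of the multiple boundary contributions. The conventions $B_{-\delta+\alpha_1}=B_0$ (which affects both the boundary $m=0$ on the right-hand side of \eqref{eq:recRn} and the auxiliary formula for $T_0\Phi(C_k)$) and $T_0\Phi(C_2)=B_0^2$ (which affects $R_n$ at $m=n-3$) generate residual contributions proportional to $C_{n-2}$ and $C_{n-3}$. These must be tracked and shown to cancel precisely against the $(q^{-2}-1)C_k$-corrections produced by the rewriting of the triple-product pieces. Once this bookkeeping is carried out the identity \eqref{eq:recRn} drops out.
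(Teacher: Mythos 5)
Your overall strategy differs from the paper's: the paper proves \eqref{eq:recRn} directly, pairing the summands $(n,n-p)$ and $(n,p-2)$ of \eqref{eq:defRn} and closing the computation with the symmetry identity $\sum_{m}C_{n-m}B_{m\delta+\alpha_1}=\sum_{k}B_{k\delta+\alpha_1}C_{n-k}$ (Equation \eqref{eq:CBBC}), whereas you propose an induction $n-3\to n$ that feeds the ``$C$-piece'' back into $R_{n-3}$. Your preparatory steps are sound: the identity $T_0\Phi(C_k)=C_{k-2}+B_0B_{(k-3)\delta+\alpha_1}+B_{(k-3)\delta+\alpha_1}B_0$ for $k\ge 3$, the separate value $T_0\Phi(C_2)=B_0^2$, the base cases, and the observation that $T_0\Phi$ turns the $C$-piece into $R_{n-3}$ plus the boundary term $q^2T_0\Phi(C_{n-3})B_0-q^{-2}B_0T_0\Phi(C_{n-3})$ are all correct. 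However, there is a genuine obstruction in the inductive step. After invoking the induction hypothesis for $R_{n-3}$, whose right-hand side is built from imaginary root vectors $B_{j\delta}$, you must transport the result back by $(T_0\Phi)^{-1}$. This produces terms $(T_0\Phi)^{-1}(B_{j\delta})$, and identifying these with $B_{j\delta}$ is exactly the fact that is \emph{not} available here: it is established only conditionally in Corollary \ref{cor:ndeltadelta} (under $[B_\delta,B_{k\delta}]=0$) and unconditionally only after Proposition \ref{prop:Bdel-commute}. The lemma is stated and used without any such hypothesis (its consumer, Lemma \ref{lem:FnRn}, does carry the hypothesis, so the paper's architecture could be repaired, but your argument as written does not prove the stated unconditional claim, and you do not flag the extra assumption you would need). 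The target formula genuinely requires products $B_{j\delta}B_{(m-1)\delta+\alpha_1}$ for the whole range of $j$, and only the $C$-piece can supply those with $j\ne n-m-2$, so this transport step cannot be avoided in your scheme.

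The second gap is that the decisive cancellation is asserted rather than performed. You write that the residual $C_{n-2}$- and $C_{n-3}$-contributions ``must be tracked and shown to cancel'' and that the identity then ``drops out,'' but this is precisely where the paper needs a nontrivial input, namely the double-sum symmetry \eqref{eq:CBBC}, which makes $\sum_{p}\bigl(B_{(p-3)\delta+\alpha_1}C_{n-p}-C_{n-p}B_{(p-3)\delta+\alpha_1}\bigr)$ vanish. Nothing in your outline plays that role, and it is not evident that your reorganization via $R_{n-3}$ absorbs it; the terms $B_{m\delta+\alpha_1}C_{n-m-3}$ carry the coefficient $-q^{-2}$ while $C_{n-m-3}B_{m\delta+\alpha_1}$ carry $q^2$, so they do not cancel termwise and some global symmetry argument is indispensable. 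To complete your route you would need to (i) either add the hypothesis $T_0\Phi(B_{k\delta})=B_{k\delta}$ for the relevant $k$ or find a way to avoid the $(T_0\Phi)^{-1}$ transport, and (ii) supply an analogue of \eqref{eq:CBBC} to close the bookkeeping. As it stands, the proposal is a plausible plan whose two critical steps are exactly the ones left unverified.
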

\begin{proof}
  By \eqref{eq:R3} and the preceding comment we know already that Equation   \eqref{eq:recRn} holds for $n=1,2,3$. Hence, for the remainder of this proof, assume that $n\ge 4$. For any $p\in \N$ with $2\le p \le n-1$ we introduce the notation
  \begin{align}\label{eq:np} 
    (n,n-p) =  q^2 T_0\Phi(C_{p})B_{(n-p-1)\delta+\alpha_1} -q^{-2}  B_{(n-p-1)\delta+\alpha_1} T_0\Phi(C_{p}). 
  \end{align}
  With this notation we can write
  \begin{align}\label{eq:Rnnnp}
    R_n= \sum_{p=2}^{n-1} (n,n-p).
  \end{align}
  For $p=2$ one has
  \begin{align}
    (n,n-2)&\stackrel{\phantom{\eqref{eq:Bndelta}}}{=} q^2 B_0^2 B_{(n-3)\delta+\alpha_1} - q^{-2}B_{(n-3)\delta+\alpha_1} B_0^2\nonumber\\
           &\stackrel{\eqref{eq:Bndelta}}{=} - q^2 B_0 B_{(n-2)\delta} -B_{(n-2)\delta} B_0  + (1{-}q^2)B_0 C_{n-2} + (q^{-2}{-}1) C_{n-2}B_0.\label{eq:nn-2}
  \end{align}
  Similarly, for $p\ge 3$ relation \eqref{eq:Cn+1-T0PhiCn-1} implies that
  \begin{align*}
      T_0 \Phi(C_{p}) =  B_0 B_{(p-3)\delta+\alpha_1} + B_{(p-3)\delta+\alpha_1} B_0 + C_{p-2}
  \end{align*}
  and hence
  \begin{align}
    (n,n{-}p)=& q^2B_{(p-3)\delta+\alpha_1}B_0B_{(n-p-1)\delta+\alpha_1}\label{eq:npint}
 - q^{-2}B_{(n-p-1)\delta+\alpha_1}B_0B_{(p-3)\delta+\alpha_1}\\
& +q^2\big(B_0B_{(p-3)\delta+\alpha_1} + C_{p-2}\big) B_{(n-p-1)\delta+\alpha_1} \nonumber\\
& -q^{-2}B_{(n-p-1)\delta+\alpha_1}\big(B_{(p-3)\delta+\alpha_1}B_0 + C_{p-2}\big).\nonumber
  \end{align}
  Again by \eqref{eq:Bndelta} we have
  \begin{align*}
    B_{(n-p-1)\delta+\alpha_1}B_0 = q^2B_0B_{(n-p-1)\delta+\alpha_1} +q^2 B_{(n-p)\delta} +(q^2-1)C_{n-p}.\nonumber
  \end{align*}
Using this formula in the first line of (\ref{eq:npint}), one obtains
\begin{align}
&(n,n{-}p)= -B_{(n-p)\delta}B_{(p-3)\delta+\alpha_1} - q^2 B_{(p-3)\delta+\alpha_1}B_{(n-p)\delta}  \label{eq:npfin}\\
& \quad \qquad+ q^2B_0B_{(p-3)\delta+\alpha_1}B_{(n-p-1)\delta+\alpha_1} 
- q^{-2}B_{(n-p-1)\delta+\alpha_1}B_{(p-3)\delta+\alpha_1}B_0\nonumber\\
& \quad \qquad + B_{(p-3)\delta+\alpha_1}B_{(n-p-1)\delta+\alpha_1}B_0 - B_0B_{(n-p-1)\delta+\alpha_1}B_{(p-3)\delta+\alpha_1}\nonumber\\
& \quad \qquad +(1-q^2)B_{(p-3)\delta+\alpha_1}C_{n-p} + (q^{-2}-1)C_{n-p}B_{(p-3)\delta+\alpha_1}\nonumber\\
&\quad \qquad +q^2C_{p-2}B_{(n-p-1)\delta+\alpha_1} -q^{-2}B_{(n-p-1)\delta+\alpha_1}C_{p-2}\nonumber
\end{align}
which holds for $p\ge 3$. Using the above expression one obtains for $3\le p\le (n+1)/2$ the relation
\begin{align}
  (n,n{-}p)&+(n,p{-}2)=(n,n{-}p)+(n,n{-}(n{-}p{+}2))\nonumber\\
  =& -B_{(n-p)\delta}B_{(p-3)\delta+\alpha_1} - q^2 B_{(p-3)\delta+\alpha_1}B_{(n-p)\delta} \label{eq:nnpnp2}\\
  &-B_{(p-2)\delta}B_{(n-p-1)\delta+\alpha_1} - q^2 B_{(n-p-1)\delta+\alpha_1}B_{(p-2)\delta} \nonumber\\
  &+(q^2-1) B_0 \big(B_{(n-p-1)\delta+\alpha_1} B_{(p-3)\delta+\alpha_1} + B_{(p-3)\delta+\alpha_1} B_{(n-p-1)\delta+\alpha_1}\big)\nonumber\\
  &+(1-q^{-2}) \big(B_{(p-3)\delta+\alpha_1} B_{(n-p-1)\delta+\alpha_1} + B_{(n-p-1)\delta+\alpha_1} B_{(p-3)\delta+\alpha_1}\big)B_0\nonumber\\
  &+(1-q^2-q^{-2})\big(B_{(p-3)\delta+\alpha_1} C_{n-p} - C_{n-p} B_{(p-3)\delta+\alpha_1}\big)\nonumber\\
  &+(1-q^2-q^{-2})\big(B_{(n-p-1)\delta+\alpha_1} C_{p-2} - C_{p-2} B_{(n-p-1)\delta+\alpha_1}\big). \nonumber
\end{align}
The terms \eqref{eq:nnpnp2} and \eqref{eq:nn-2} cover all terms of the summation in \eqref{eq:Rnnnp} if $n$ is odd. If $n$ is even then the sum \eqref{eq:Rnnnp} contains the additional summand $(n,n-(\frac{n}{2}+1))$. By \eqref{eq:npfin} this summand is given by
\begin{align}
  (n,n{-}(\frac{n}{2}{+}1))  =& -B_{(\frac{n}{2}-1)\delta} B_{(\frac{n}{2}-2)\delta+\alpha_1}- q^2 B_{(\frac{n}{2}-2)\delta+\alpha_1} B_{(\frac{n}{2}-1)\delta}  \label{eq:nnh+1}\\
  &+(q^2-1) B_0 B^2_{(\frac{n}{2}-2)\delta+\alpha_1} + (1-q^{-2})B^2_{(\frac{n}{2}-2)\delta+\alpha_1} B_0 \nonumber\\
 & +(1-q^2-q^{-2})\big( B_{(\frac{n}{2}-2)\delta+\alpha_1} C_{\frac{n}{2}-1}-  C_{\frac{n}{2}-1} B_{(\frac{n}{2}-2)\delta+\alpha_1}\big)\nonumber
\end{align}
Adding up \eqref{eq:nn-2}, \eqref{eq:nnpnp2} for  $3\le p\le (n+1)/2$, and \eqref{eq:nnh+1} if $n$ is even, we obtain
\begin{align*}
  R_n=& -\sum_{p=2}^{n-1}\big(B_{(n-p)\delta}B_{(p-3)\delta+\alpha_1} + q^2 B_{(p-3)\delta+\alpha_1} B_{(n-p)\delta}\big)\\
&+(1-q^2)B_0C_{n-2} +(q^2-1)B_0 \underbrace{\sum_{p=3}^{n-1}\big(B_{(p-3)\delta+\alpha_1} B_{(n-p-1)\delta+\alpha_1} \big)}_{=C_{n-2}}\\
&+ (q^{-2}-1)C_{n-2}B_0 + (1-q^{-2})\underbrace{\sum_{p=3}^{n-1}\big(B_{(p-3)\delta+\alpha_1} B_{(n-p-1)\delta+\alpha_1}\big)}_{=C_{n-2}}B_0\\
& + (1-q^2-q^{-2})\sum_{p=3}^{n-1}\big( B_{(p-3)\delta+\alpha_1}C_{n-p}- C_{n-p}B_{(p-3)\delta+\alpha_1}\big).
\end{align*}
As indicated, the second and the third line of the above expression vanish. Hence, to prove the lemma, it remains to see that the last line of the above expression also vanishes. This follows from the relation
\begin{align}
  \sum_{m=0}^{n-2}C_{n-m} B_{m\delta+\alpha_1}&=\sum_{m=0}^{n-1}\sum_{k=0}^{n-m-2}B_{k\delta+\alpha_1}B_{(n-m-k-2)\delta+\alpha_1} B_{m\delta+\alpha_1}\label{eq:CBBC}\\
  &=\sum_{k=0}^{n-2}B_{k\delta+\alpha_1} C_{n-k}.\nonumber
\end{align}
Replacing $n$ by $n-3$ and shifting the summation index up by $3$, one obtains indeed that the last line of the above expression for $R_n$ vanishes. This completes the proof of the Lemma.
\end{proof}
For later use we note that application of $T_0\Phi$ to Equation \eqref{eq:CBBC} in the above proof gives
\begin{align*}
   \sum_{m=0}^{n-2}T_0\Phi(C_{n-m}) B_{(m-1)\delta+\alpha_1}-\sum_{m=0}^{n-2}B_{(m-1)\delta+\alpha_1} T_0\Phi(C_{n-m})=0,
\end{align*}
which can be rewritten as
\begin{align*}
  \sum_{m=0}^{n-3}T_0\Phi(C_{n-m-1}) B_{m\delta+\alpha_1}-\sum_{m=0}^{n-3}B_{m\delta+\alpha_1} &T_0\Phi(C_{n-m-1})\\
  =& B_0T_0\Phi(C_n) -  T_0\Phi(C_n) B_0.
\end{align*}
Hence we obtain
\begin{align}
   [T_0\Phi(C_n),B_0]= \sum_{m=0}^{n-3}B_{m\delta+\alpha_1} T_0\Phi(C_{n-m-1}) -\sum_{m=0}^{n-3}T_0\Phi&(C_{n-m-1}) B_{m\delta+\alpha_1}\label{eq:com2} 
  \end{align}
which holds for all $n\in \N$. Recall that by definition
\begin{align}\label{eq:combfin}
 F_n=q^{-2}[C_{n-1},B_{\delta+\alpha_0}] - [T_0\Phi(C_n),B_0] - (q^2-q^{-2})B_{\delta+\alpha_0}C_{n-1}
\end{align}
for any $n\in \N$ with $n\ge 2$. Equation \eqref{eq:com2} describes the second commutator in the above expression. With the help of Lemma \ref{lem:Rn} and Equation \eqref{eq:com2} we now provide an alternative formula for the element $F_n$. This formula is the main ingredient needed to prove the recursive formula in Lemma \ref{lem:Fn}.
\begin{lem}\label{lem:FnRn}
  Let $n\in \N$ with $n\ge 2$ and assume that $T_0\Phi(B_{k\delta})=B_{k\delta}$ for all $k\in \N$ with $k\le n-1$. Then one has
\begin{align}
 F_n =&  \sum_{m=0}^{n-3}\big( B_{m\delta+\alpha_1}B_{(n-1-m)\delta} +q^2B_{(n-1-m)\delta}B_{m\delta+\alpha_1} \big)\label{eq:Fn-new}\\
& - \sum_{m=0}^{n-3} \big( B_{(n-m-2)\delta}B_{(m-1)\delta+\alpha_1}
  + q^2 B_{(m-1)\delta+\alpha_1} B_{(n-m-2)\delta}\big).\nonumber
\end{align}
\end{lem}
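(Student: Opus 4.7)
The plan hinges on the observation that, by Lemma~\ref{lem:Rn}, the second displayed sum on the right-hand side is exactly $R_n$. So it suffices to verify that
\[F_n - R_n = \sum_{m=0}^{n-3}\bigl(B_{m\delta+\alpha_1}B_{(n-1-m)\delta} + q^2 B_{(n-1-m)\delta}B_{m\delta+\alpha_1}\bigr).\]

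The first step is to substitute Equation~\eqref{eq:com2} for the commutator $[T_0\Phi(C_n),B_0]$ in the definition \eqref{eq:combfin} of $F_n$. Abbreviating $A = \sum_{m=0}^{n-3}T_0\Phi(C_{n-m-1})B_{m\delta+\alpha_1}$ and $B = \sum_{m=0}^{n-3}B_{m\delta+\alpha_1}T_0\Phi(C_{n-m-1})$, and simplifying $q^{-2}[C_{n-1},B_{\delta+\alpha_0}] - (q^2-q^{-2})B_{\delta+\alpha_0}C_{n-1}$ to $q^{-2}C_{n-1}B_{\delta+\alpha_0} - q^2 B_{\delta+\alpha_0}C_{n-1}$, and then using $R_n = q^2 A - q^{-2}B$ from \eqref{eq:defRn}, a short bookkeeping gives
\[F_n - R_n = q^{-2}C_{n-1}B_{\delta+\alpha_0} - q^2 B_{\delta+\alpha_0}C_{n-1} + (1-q^2)A - (1-q^{-2})B.\]

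The second step is to expand each $T_0\Phi(C_{n-m-1})$ inside $A$ and $B$ via the recursion $T_0\Phi(C_k) = B_0 B_{(k-3)\delta+\alpha_1} + B_{(k-3)\delta+\alpha_1}B_0 + C_{k-2}$ for $k\ge 3$, obtained by applying $T_0\Phi$ to \eqref{eq:Cn+1-T0PhiCn-1} and invoking the standing hypothesis $T_0\Phi(B_{j\delta})=B_{j\delta}$. This splits $A$ and $B$ into three groups of terms each. The $B_0$-prefix and $B_0$-suffix groups, combined with the coefficients $(1-q^2)$ and $-(1-q^{-2})$ and with the definition \eqref{eq:Bndelta} of $B_{j\delta}$ rearranged as $-B_0B_{(j-1)\delta+\alpha_1} + q^{-2}B_{(j-1)\delta+\alpha_1}B_0 = B_{j\delta} - (q^{-2}-1)C_j$, should reassemble into precisely the desired sum $\sum_{m=0}^{n-3}(B_{m\delta+\alpha_1}B_{(n-1-m)\delta} + q^2 B_{(n-1-m)\delta}B_{m\delta+\alpha_1})$, plus residual terms involving various $C_\cdot$.

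The main obstacle is to show that these residual $C_\cdot$-terms, taken together with the explicit $q^{-2}C_{n-1}B_{\delta+\alpha_0} - q^2 B_{\delta+\alpha_0}C_{n-1}$ contribution, cancel identically in $\cb_c$. I expect this cancellation to follow from the reflection identity \eqref{eq:CBBC} (or rather its image under $T_0\Phi$), exploited in the same telescoping fashion as in the closing paragraph of the proof of Lemma~\ref{lem:Rn}. A further subtlety is that the recursion for $T_0\Phi(C_k)$ is valid only for $k\ge 3$, so the boundary value $k=2$ (corresponding to $m=n-3$, where $T_0\Phi(C_2)=B_0^2$ does not fit the generic formula) must be treated separately; its contribution should combine with the generic sum in a manner consistent with the identity.
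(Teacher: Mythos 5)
Your reduction is correct and is essentially the paper's own bookkeeping: with your $A$ and $B$ one has $[T_0\Phi(C_n),B_0]=B-A$ by \eqref{eq:com2} and $R_n=q^2A-q^{-2}B$ by \eqref{eq:defRn}, whence
\begin{align*}
F_n-R_n= q^{-2}C_{n-1}B_{\delta+\alpha_0}-q^2B_{\delta+\alpha_0}C_{n-1}+(1-q^2)A-(1-q^{-2})B,
\end{align*}
and by Lemma~\ref{lem:Rn} it remains to identify the right hand side with $\sum_{m=0}^{n-3}\big(B_{m\delta+\alpha_1}B_{(n-1-m)\delta}+q^2B_{(n-1-m)\delta}B_{m\delta+\alpha_1}\big)$. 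The plan you give for this last identification, however, would fail. The target contains the imaginary root vectors $B_{(n-1)\delta}$ and $B_{(n-2)\delta}$ (the summands $m=0,1$), and these cannot be assembled from the expansion of $A$ and $B$: after substituting $T_0\Phi(C_k)=B_0B_{(k-3)\delta+\alpha_1}+B_{(k-3)\delta+\alpha_1}B_0+C_{k-2}$, the real root vectors occurring there are $B_{j\delta+\alpha_1}$ with $j\le n-4$, so the identity $-B_0B_{(j-1)\delta+\alpha_1}+q^{-2}B_{(j-1)\delta+\alpha_1}B_0=B_{j\delta}-(q^{-2}-1)C_j$ can only ever produce $B_{j\delta}$ with $j\le n-3$; moreover the prefix/suffix pairs occur with coefficients $(1,1)$ rather than $(-1,q^{-2})$ and leave behind monomials with $B_0$ in the middle.

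The roles of the two blocks are in fact the opposite of what you expect. The block $q^{-2}C_{n-1}B_{\delta+\alpha_0}-q^2B_{\delta+\alpha_0}C_{n-1}$ does not cancel against residual $C$-terms; it is the sole source of the answer. Writing $C_{n-1}B_{\delta+\alpha_0}=\sum_{m}B_{m\delta+\alpha_1}B_{(n-m-3)\delta+\alpha_1}B_{\delta+\alpha_0}$ and pushing $B_{\delta+\alpha_0}$ to the left twice by means of the $T_0\Phi$-twist \eqref{eq:T0PhiBnd} of \eqref{eq:Bndelta} --- the only place where the hypothesis $T_0\Phi(B_{k\delta})=B_{k\delta}$ is genuinely needed, and a step that appears nowhere in your plan --- one obtains, as in \eqref{eq:com1f},
\begin{align*}
q^{-2}C_{n-1}B_{\delta+\alpha_0}-q^2B_{\delta+\alpha_0}C_{n-1}&=\sum_{m=0}^{n-3}\big(B_{m\delta+\alpha_1}B_{(n-m-1)\delta}+q^2B_{(n-m-1)\delta}B_{m\delta+\alpha_1}\big)\\
&\qquad+(q^2-1)A+(1-q^{-2})B.
\end{align*}
It is therefore the leftover $(q^2-1)A+(1-q^{-2})B$ from this block that cancels your $(1-q^2)A-(1-q^{-2})B$, not the other way round; the reflection identity \eqref{eq:CBBC} plays no further role here (it is already spent in deriving \eqref{eq:com2} and in the proof of Lemma~\ref{lem:Rn}), and no separate treatment of the boundary value $T_0\Phi(C_2)=B_0^2$ is needed once the calculation is routed through $C_{n-1}B_{\delta+\alpha_0}$.
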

\begin{proof}
To prove the formula we expand the first commutator in \eqref{eq:combfin}. We have
\begin{align}\label{eq:com1}
\qquad [C_{n-1},B_{\delta+\alpha_0}]=\sum_{m=0}^{n-3}B_{m\delta+\alpha_1}B_{(n-m-3)\delta+\alpha_1} B_{\delta+\alpha_0} -B_{\delta+\alpha_0}C_{n-1}.
\end{align}
By assumption, acting with $T_0\Phi$ on Equation \eqref{eq:Bndelta} gives
\begin{align}\label{eq:T0PhiBnd}
 B_{(k-2)\delta+\alpha_1} B_{\delta+\alpha_0} &=q^2 B_{k\delta} + q^2B_{\delta+\alpha_0} B_{(k-2)\delta+\alpha_1} + (q^2-1)T_0\Phi(C_k)
\end{align}
for any $k\in \N$ with $k\le n-1$.
Using \eqref{eq:T0PhiBnd} for $k=n-m-1$ we obtain
\begin{align}
B_{m\delta+\alpha_1}B_{(n-m-3)\delta+\alpha_1}& B_{\delta+\alpha_0} =   q^2 B_{m\delta+\alpha_1}B_{(n-m-1)\delta}\label{BBB}\\
+ q^2B_{m\delta+\alpha_1} B_{\delta+\alpha_0} & B_{(n-m-3)\delta+\alpha_1}
 + (q^2-1)B_{m\delta+\alpha_1}T_0\Phi(C_{n-m-1}).\nonumber
\end{align}
Using \eqref{eq:T0PhiBnd} for $k=m+2$ in the second term on the right hand side of \eqref{BBB} and inserting the result into \eqref{eq:com1} we get
\begin{align}
   [C_{n-1},&B_{\delta+\alpha_0}]=q^2 \sum_{m=0}^{n-3}B_{m\delta+\alpha_1}B_{(n-m-1)\delta} + q^4 \sum_{m=0}^{n-3}B_{(n-m-1)\delta} B_{m\delta+\alpha_1}\label{eq:com1f}\\
 +& (q^2{-}1) \big(\sum_{m=0}^{n-3} B_{m\delta+\alpha_1}T_0\Phi(C_{n-m-1})
+ q^2 \sum_{m=0}^{n-3} T_0\Phi(C_{n-m-1})B_{m\delta+\alpha_1}\big)\nonumber\\
  +&(q^4{-}1)B_{\delta+\alpha_0}C_{n-1}.\nonumber
\end{align}
The above formula and Equation \eqref{eq:com2} imply that
\begin{align}
  F_n  =& \sum_{m=0}^{n-3}B_{m\delta+\alpha_1}B_{(n-m-1)\delta} + q^2 \sum_{m=0}^{n-3}B_{(n-m-1)\delta} B_{m\delta+\alpha_1}\label{eq:Fn-nearly}\\
  &-q^{-2} \sum_{m=0}^{n-3} B_{m\delta+\alpha_1}T_0\Phi(C_{n-m-1})
  + q^2\sum_{m=0}^{n-3} T_0\Phi(C_{n-m-1})B_{m\delta+\alpha_1}.\nonumber
\end{align}
In view of the definition \eqref{eq:defRn} of the element $R_n$ we obtain
\begin{align*}
   F_n  =& \sum_{m=0}^{n-3}B_{m\delta+\alpha_1}B_{(n-m-1)\delta} + q^2 \sum_{m=0}^{n-3}B_{(n-m-1)\delta} B_{m\delta+\alpha_1} + R_n.
\end{align*}
Now Lemma \ref{lem:Rn} implies Equation \eqref{eq:Fn-new} which completes the proof.
\end{proof}
We are now in a position to prove Lemma \ref{lem:Fn} as an immediate consequence of Lemma \ref{lem:FnRn}.
\begin{proof}[Proof of Lemma \ref{lem:Fn}]
  In view of the assumption $[B_\delta,B_{k\delta}]=0$ for all $k<n$, Corollary \ref{cor:ndeltadelta} implies that
    $T_0\Phi(B_{k\delta})=B_{k\delta}$ for all $k< n+1$.
  Hence Lemma \ref{lem:FnRn} implies that
  \begin{align*}
     F_{n+1}=&  \sum_{m=0}^{n-2}\big( B_{m\delta+\alpha_1}B_{(n-m)\delta} +q^2B_{(n-m)\delta}B_{m\delta+\alpha_1} \big)\\
& - \sum_{m=0}^{n-2} \big( B_{(n-m-1)\delta}B_{(m-1)\delta+\alpha_1}
     + q^2 B_{(m-1)\delta+\alpha_1} B_{(n-m-1)\delta}\big)\\
     =& \sum_{m=0}^{n-3}\big( (T_0\Phi)^{-1}(B_{m\delta+\alpha_1})B_{(n-m-1)\delta} +q^2B_{(n-m-1)\delta}  (T_0\Phi)^{-1}(B_{m\delta+\alpha_1}) \big)\\
&\qquad     +B_1 B_{n\delta} + q^2 B_{n\delta} B_1 - B_{(n-1)\delta} B_0 - q^2 B_0 B_{(n-1)\delta} \\
& - \sum_{m=0}^{n-3} \big( B_{(n-m-2)\delta}  (T_0\Phi)^{-1} (B_{(m-1)\delta+\alpha_1})
            + q^2  (T_0\Phi)^{-1}(B_{(m-1)\delta+\alpha_1}) B_{(n-m-2)\delta}\big)\\
     =&  (T_0\Phi)^{-1}(F_n)  +B_1 B_{n\delta} + q^2 B_{n\delta} B_1 - B_{(n-1)\delta} B_0 - q^2 B_0 B_{(n-1)\delta}.
  \end{align*}
  This proves Lemma \ref{lem:Fn}.
\end{proof}
\end{appendix}


\end{document}